\newtheorem{theorem}{Theorem}[section]
\newtheorem*{claim*}{Claim}
\newtheorem{lemma}[theorem]{Lemma}
\newtheorem{lem}[theorem]{Lemma}
\newtheorem{proposition}[theorem]{Proposition}
\newtheorem{prop}[theorem]{Proposition}
\newtheorem{thm}[theorem]{Theorem}
\theoremstyle{definition}
\theoremstyle{remark}
\numberwithin{equation}{section}
\newcommand{\norm}[1]{\lVert#1\rVert}
\newcommand{\op}{\operatorname}
\newcommand{\bb}{\mathbb}
\newcommand{\Ga}{\Gamma}
\newcommand{\ga}{\gamma}
\newcommand{\la}{\lambda}
\newcommand{\La}{\Lambda}
\newcommand{\ba}{\backslash}
\newcommand{\cl}{\overline}
\newcommand{\cal}{\mathcal}
\newcommand{\br}{\mathbb R}
\newcommand{\SO}{\op{SO}}
\newcommand{\bH}{\mathbb H}
\newcommand{\RFM}{\op{RF} {\cal S}_1}
\newcommand{\RFPM}{\op{RF}_+ {\cal S}_1}
\newcommand{\hull}{\op{hull}}
\newcommand{\be}{\begin{equation}}
\newcommand{\ee}{\end{equation}}
\newcommand{\G}{\Gamma}
\newcommand{\mS}{\mathscr{S}}
\newcommand{\T}{\op{T}}
\newcommand{\mT}{\mathsf T}
\renewcommand{\epsilon}{\varepsilon}
\newcommand{\bt}{{\bf t}}\newcommand{\bs}{{\bf s}}
\newcommand{\Om}{\Omega}\newcommand{\bS}{\mathbb S}
\DeclareMathAlphabet{\mathpzc}{OT1}{pzc}{m}{it}
\begin{document}

\title[Orbit closures  of Zariski dense subgroups]{Topological proof of Benoist-Quint's orbit closure theorem for $\SO(d,1)$}
\author{Minju Lee}

\address{Department of Mathematics, Yale University, New Haven, CT 06520}
\email{minju.lee@yale.edu}

\author{Hee Oh}
\address{Mathematics department, Yale university, New Haven, CT 06511}
\email{hee.oh@yale.edu}

\thanks{Oh was supported in part by NSF Grant \#1900101.}



\begin{abstract} { We present a new  proof of the following theorem of Benoist-Quint: 
Let $G:=\SO^\circ(d,1)$, $d\ge 2$ and $\Delta<G$  a cocompact lattice.
Any orbit of a Zariski dense subgroup $\Gamma$ of $G$ is either finite or dense in $\Delta\ba G$.
While Benoist and Quint's  proof is based on the classification of stationary measures, our proof is topological, using ideas from the study of dynamics of unipotent flows on
the infinite volume homogeneous space $\Gamma\ba G$. }
\end{abstract}

\maketitle

\section{Introduction}
Let $G=\SO^\circ (d,1)$ for $d\ge 2$, and 
 $\Delta$ a cocompact lattice in $G$.
  Let $\Ga$ be a Zariski dense subgroup of $G$, acting on the space $\Delta\ba G$ by right translations.

The aim of this paper is to present a new proof of the following theorem of Benoist-Quint in \cite{BQ1}, which was originally  a question of 
Margulis \cite{Ma} and Shah \cite{Sh0}:
\begin{thm}\label{mt} Any $\Ga$-invariant subset of $\Delta\ba G$ is either finite or dense.
\end{thm}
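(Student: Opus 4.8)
\emph{Plan and reductions.} The plan is to establish the equivalent statement that if $x\Gamma$ is infinite then $\cl{x\Gamma}=\Delta\ba G$. So fix $x$ with $x\Gamma$ infinite and put $Y:=\cl{x\Gamma}$, a closed, $\Gamma$-invariant, infinite subset of the \emph{compact} space $\Delta\ba G$; we must show $Y=\Delta\ba G$. Conjugating by an element of $G$ we may assume $x$ is the identity coset, so $Y$ is the closure of the image of $\Gamma$ under $G\to\Delta\ba G$. Two inputs are used throughout. First, $\Delta\ba G$ is compact, so Ratner's orbit closure theorem and the minimality of horospherical flows on $\Delta\ba G$ apply with no restriction. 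Second, $\Gamma$ is Zariski dense in $\SO^\circ(d,1)$, hence lies in no proper closed subgroup and contains loxodromic (regular $\br$-semisimple) elements --- indeed pairs of them with attracting and repelling fixed points in general position; conjugating, such an element lies in $AM$, where $A=\{a_t\}$ is the geodesic one-parameter subgroup and $M=Z_K(A)$, and its powers renormalize the horospherical subgroups $N^\pm$ by exponential contraction/expansion. The mechanism is most transparent through the infinite-volume space $\Gamma\ba G$ and the self-joining $(\Gamma\ba G)\times(\Delta\ba G)$, on which the diagonal copy of $G\cong\SO^\circ(d,1)$ (a group generated by unipotents) acts, $Y$ arising as a slice of a diagonal-$G$ orbit closure; this is what lets us import the topological methods developed for unipotent flows.

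\emph{Producing unipotent invariance.} Since $Y$ is infinite and compact there are distinct $y_n,y_n'\in Y$ with $y_n'=y_nh_n$ and $h_n\to e$ in $G$; a priori the directions $h_n$ are arbitrary. The point is that $\Gamma$-invariance of $Y$ lets us replace $(y_n,h_n)$ by $(y_n\ga_n,\ga_n^{-1}h_n\ga_n)$ for any $\ga_n\in\Gamma$, and --- using that $\Gamma$ lies in no proper closed subgroup and contains loxodromics in general position --- we can select $\ga_n$ for which $\mathrm{Ad}(\ga_n^{-1})$ asymptotically realizes its largest expansion along the relevant component of $h_n$. Passing to a subsequence, $y_n\ga_n\to z\in Y$ while $\ga_n^{-1}h_n\ga_n\to h_\infty\ne e$; since $\|h_n\|\to0$, the surviving displacement $h_\infty$ is forced into a top $\mathrm{Ad}$-weight space, i.e.\ $h_\infty$ is a nontrivial element of a horospherical subgroup $N^\pm$, and $z,zh_\infty\in Y$. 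Conjugating by high powers of a loxodromic element of $\Gamma$ expanding this horospherical direction --- legitimate because $z\ga^n\in Y$ --- and extracting limits via compactness of $\Delta\ba G$, we amplify this single ``return'' into invariance of $Y$ under a genuine one-parameter unipotent subgroup $U$. \textbf{This extraction of the first piece of unipotent invariance out of nothing more than infiniteness of the orbit, while keeping the conjugated displacements from either escaping to infinity or collapsing to $e$, is the main obstacle}; it is the point at which the Cartan/horospherical structure and Zariski density must be combined with care.

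\emph{Bootstrapping and conclusion.} Now $Y$ is invariant under both $U$ and $\Gamma$. For $d=2$ one concludes at once: $U=N$ is the full horospherical subgroup, the horocycle flow on the compact $\Delta\ba G$ is minimal (Hedlund), so $Y=\Delta\ba G$. In general, let $Y_0\subseteq Y$ be a $U$-minimal subset; by Ratner's theorem on the finite-volume space $\Delta\ba G$, $Y_0=z_0L$ is a closed orbit of a closed subgroup $L\supseteq U$ generated by unipotents, and $\cl{z_0L\Gamma}\subseteq Y$ by $\Gamma$-invariance. If $L=G$ we are done. If not, $z_0L$ is a closed orbit of a proper subgroup --- up to conjugacy $L\le N$ or $L\cong\SO^\circ(k,1)$ for some $k<d$ --- and, taking such an orbit in $Y$ with $\dim L$ maximal, one translates $z_0L$ by well-chosen loxodromic elements of $\Gamma$ and uses geodesic-flow renormalization to create, in the limit, invariance of $Y$ transverse to $L$, contradicting maximality; the only alternative is that $\Gamma$ preserves the finite family of totally geodesic or horospherical sub-objects of $\Delta\ba G$ meeting $Y$, which is impossible since $\Gamma$ is Zariski dense. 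Hence $L=G$, so $Y\supseteq\cl{z_0G\Gamma}=\Delta\ba G$, as required. Two remarks: the hypothesis on $\Gamma$ beyond Zariski density plays no role --- in particular $\Gamma$ need not be geometrically finite --- because the whole argument takes place in the compact space $\Delta\ba G$, where the homogeneous-dynamics inputs hold unconditionally; and the ``finite'' alternative of the theorem requires no argument, being simply the case that $\mathrm{Stab}_\Gamma(x)=g^{-1}\Delta g\cap\Gamma$ has finite index in $\Gamma$.
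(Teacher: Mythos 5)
Your proposal correctly identifies the overall shape of the argument (extract unipotent invariance of $Y=\overline{x\Gamma}$, then bootstrap to all of $G$), and you even flag the extraction step as ``the main obstacle'' --- but you do not overcome it, and it is a genuine gap rather than a routine verification. To make $\gamma_n^{-1}h_n\gamma_n$ converge to a nontrivial horospherical element you need, for each $n$, an element $\gamma_n\in\Gamma$ whose adjoint expansion matches $\|h_n\|^{-1}$ \emph{and} whose expanding eigenspace sees a non-negligible component of $h_n$; since $\Gamma$ is merely a Zariski dense discrete (possibly very thin) subgroup, the supply of conjugators is sparse, their translation lengths form a discrete set, and there is no a priori reason the limits $h_\infty$ avoid degenerating to $e$ or landing in the centralizing directions $AM$ rather than in $N^{\pm}$. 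This is exactly the difficulty the paper's machinery is built to address: it passes to the self-joining $Z=\Gamma\ba G\times\Delta\ba G$, replaces the choice of $\gamma_n$ by polynomial divergence of diagonal $U$-orbits (quasi-regular maps \`a la Margulis--Tomanov, Lemmas \ref{lem.UgU} and \ref{lem.UgH}), and the substitute for recurrence is that the renormalized return times $\limsup_i\lambda_i^{-1}\mathsf T(x)$ to the compact set $\Omega$ are locally Zariski dense near $e$ (Lemmas \ref{Zd} and \ref{ZD}) --- a fact resting on the structure of the limit set $\Lambda_1$ and requiring the preliminary reduction to convex cocompact $\Gamma$ via Schottky subgroups (Lemma \ref{sh}). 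Your closing remark that the hypotheses on $\Gamma$ beyond Zariski density ``play no role because the whole argument takes place in the compact space $\Delta\ba G$'' is therefore exactly backwards: the inputs on $\Delta\ba G$ are indeed unconditional, but the hard part is recurrence on the $\Gamma$-side, which lives in the infinite-volume space $\Gamma\ba G$.

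The bootstrapping paragraph has the same character. Writing a $U$-minimal set as a closed orbit $z_0L$ via Ratner and then ``translating by well-chosen loxodromic elements to create transverse invariance'' again presupposes a supply of elements of $\Gamma$ with controlled expansion, and the asserted dichotomy with ``a finite family of sub-objects meeting $Y$'' is unjustified, since nothing bounds that family. The paper instead extracts from the relative $U$-minimal set a non-constant analytic curve $\cal C\subset U_2$ with $Y\cal C\subset X$ (Theorem \ref{mmm}) and concludes with Shah's equidistribution theorem for expanding translates of analytic curves \cite{Sh2} --- a substantially stronger input than Ratner's orbit closure theorem, for which your outline offers no substitute. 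As written, the proposal is a plausible road map with the two hardest steps asserted rather than proved.
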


The proof of Benoist-Quint is based on their classification of stationary measures for random walks on $\Ga$ on the space $\Delta\ba G$.
Our proof  is topological and can be easily modified to all rank one simple Lie groups; for the sake of concreteness, we opted to write it only for $G=\SO^\circ (d,1)$.
 In the case
when $G=\SO^\circ(2,1)$ and $\Ga<G$ is a convex cocompact Zariski dense subgroup, Benoist-Oh gave a topological proof of Theorem \ref{mt} when the $\Gamma$-invariant subset is a single $\Gamma$-orbit \cite{BO}.  

Since a Zariski dense subgroup of $G$ is either discrete or dense, it suffices to consider the case when $\Gamma$ is discrete.
Our starting point is  then the observation that Theorem \ref{mt} can be translated into a problem on the orbit closure of unipotent flows
on a homogeneous space of {\it infinite volume}. If we set $H=\{(g,g): g\in G\}$ to be the diagonal embedding of $G$ into $G\times G$,
 Theorem \ref{mt} is equivalent to the following statement about
the $H$-action on the product space $\Gamma\ba G\times \Delta\ba G$, which has infinite volume unless $\Gamma$ is a lattice.
\begin{thm}\label{dm} 
 Any $H$-invariant  closed subset of $(\Gamma\times \Delta)\ba (G\times G )$ is either a union of finitely many closed $H$-orbits or dense.
In particular, any $H$-orbit is either closed or dense.
\end{thm}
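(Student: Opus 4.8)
\textbf{Strategy of proof of Theorem~\ref{dm}.}
The plan is to reduce the statement to a \emph{homogeneity} property of $H$-orbit closures, and to prove that property by importing the polynomial-divergence dynamics of unipotent flows by hand. Two algebraic facts drive the reduction. First, since $G$ is a connected simple Lie group with trivial center, the only subgroups $L$ with $H\le L\le G\times G$ are $H$ and $G\times G$: the group $N:=L\cap(\{e\}\times G)$ is normalized by $H$, hence by $G$, so $N=\{e\}$ or $N=G$; if $N=\{e\}$ the first projection identifies $L$ with $G$ while fixing the diagonal, forcing $L=H$, and if $N=G$ then $L\supseteq(\{e\}\times G)\cdot H=G\times G$. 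Second, $G\times G$ acts transitively on $(\Gamma\times\Delta)\ba(G\times G)\cong(\Gamma\ba G)\times(\Delta\ba G)$, so a \emph{closed} set containing one $(G\times G)$-orbit is the whole space. Hence it suffices to show that $\ov{xH}$ is always homogeneous, $\ov{xH}=xL$ for a closed $L\supseteq H$ (so $\ov{xH}$ is $xH$ or everything), and that a closed $H$-invariant set all of whose $H$-orbits are closed is a \emph{finite} union of them.

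\textbf{A non-wandering generic point.}
To prove homogeneity it is enough to show: if $\ov{xH}\ne xH$ then $\ov{xH}$ contains a strictly larger orbit, hence, by the classification, everything. The factor $\Delta\ba G$ is compact; $\Gamma\ba G$ has infinite volume, so the recurrent dynamics lives over the non-wandering set $\RF(\Gamma\ba G)$ of the geodesic flow, which is nonempty since $\Gamma$ is Zariski dense. A key point is that the first-coordinate projection of any $H$-orbit is \emph{all} of $\Gamma\ba G$ (the diagonal $\{(g,g)\}$ already acts transitively on the first factor), so after replacing $x$ by a suitable point of $xH$ we may assume $x\in\RF(\Gamma\ba G)\times(\Delta\ba G)$; then the diagonal geodesic flow $\{(a_t,a_t)\}\subset H$ returns $x$, along some $t_n\to\infty$, into a fixed compact set $\mathcal K\subset\RF(\Gamma\ba G)\times(\Delta\ba G)$. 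Since $\ov{xH}$ strictly contains $xH$ (or, after finitely many reductions to a strictly smaller $H$-orbit in the frontier, contains a non-locally-closed $H$-orbit), we may fix $x$ and a sequence $x_n\to x$ in $\ov{xH}$ with $x_n$ off the local $H$-leaf of $x$; write $x_n=x\exp(v_n)$, $0\ne v_n\to 0$, with $v_n$ in a fixed complement $\mathfrak r$ of $\mathrm{Lie}(H)$ in $\mathfrak g\oplus\mathfrak g$.

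\textbf{Expanding to a new direction and closing up.}
Flow $x$ and $x_n$ by a diagonal one-parameter unipotent subgroup $\{(u_s,u_s)\}\subset H$. Their relative displacement is $\exp(\mathrm{Ad}(u_{-s})v_n)$, a polynomial in $s$; letting $s_n$ be the first time its norm reaches a small fixed $\rho$, one gets $s_n\to\infty$, and recurrence of $x\,(u_{s_n},u_{s_n})$ to $\mathcal K$ lets us pass to a limit $y\in\ov{xH}\cap\mathcal K$ with $y\exp(w)\in\ov{xH}$, $w\in\mathfrak r$, $\|w\|=\rho$, and --- by the $\SO^\circ(d,1)$ root structure --- $w$ lying in the top $\mathrm{Ad}(\{a_t\})$-weight subspace of $\mathfrak r$ surviving the renormalization, which, using the $H$-translation freedom (and, if needed, iterating with a different unipotent or a different pair), can be arranged to lie in the second-factor horospherical subalgebra $\{0\}\oplus\mathfrak n^+$. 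Conjugating by the diagonal $\{a_t\}$ and by a diagonal Weyl element --- both in $H$ --- propagates this into $\{0\}\oplus\mathfrak n^-$ as well, and since $\mathfrak n^+$ and $\mathfrak n^-$ generate $\mathfrak g$ as a Lie algebra, a closing-up argument of the kind used in homogeneity proofs for unipotent flows identifies $\ov{yH}$ with $y\exp(\mathfrak l)$, where $\mathfrak l$ is the subalgebra generated by $\mathrm{Lie}(H)$ and the accumulated directions; by the subgroup classification, $\mathfrak l\supsetneq\mathrm{Lie}(H)$ forces $\mathfrak l=\mathfrak g\oplus\mathfrak g$, so $\ov{xH}\supseteq\ov{yH}$ is everything. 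Finally, if $X$ is closed, $H$-invariant and all of its $H$-orbits are closed, then a sequence of distinct such orbits accumulating at a point of a closed orbit $x_0H\subset X$ would --- near $x_0H$, whose return to a compact set is controlled because $\Delta\ba G$ is compact --- force a non-closed $H$-orbit in $X$ by a tube/no-small-subgroups argument, a contradiction; so the union is finite.

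\textbf{The main obstacle.}
The heart of the matter is the implication ``non-closed $H$-orbit $\Rightarrow$ a nonzero new invariant direction transverse to $H$ in the second factor.'' One must (i) prevent the limiting displacement $w$ from collapsing back into $\mathrm{Lie}(H)$ --- the classical difficulty, handled in finite volume by Ratner's measure classification together with the Dani--Margulis linearization and the $(h,c)$-lemma --- and, specific to the present infinite-volume setting, (ii) keep the unipotent orbits $x\,(u_s,u_s)$ recurrent to a compact part of $\RF(\Gamma\ba G)$ rather than escaping into a funnel or cusp of $\Gamma\ba G$. The ``thick recurrence of horospherical orbits to the non-wandering set'' required for (ii) replaces the ergodic-theoretic input available in the lattice case, and organizing it --- together with the bookkeeping in (i) that turns a raw new direction into one inside $\{0\}\oplus\mathfrak n^+$ --- is where the real work lies.
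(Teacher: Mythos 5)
Your write-up is a strategy outline rather than a proof, and the steps you defer are exactly the ones that carry the content of the theorem. Three concrete gaps. First, you work with a general Zariski dense discrete $\Gamma$ and assert that the geodesic flow returns your point to a fixed compact subset of $\RF(\Gamma\ba G)$; neither the compactness of (a large enough part of) $\RF(\Gamma\ba G)$ nor the recurrence is available for a general discrete Zariski dense subgroup. The paper first reduces to the convex cocompact case by extracting a Zariski dense Schottky subgroup (Lemma \ref{sh}); without some such reduction your non-wandering-point step does not get off the ground. Second, your expansion step flows by a single diagonal one-parameter unipotent $\{(u_s,u_s)\}$ and needs $x(u_{s_n},u_{s_n})$ to return to a compact set at the stopping times $s_n$. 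For $d\ge 3$ and a general convex cocompact $\Gamma$ this ``thick return'' property for one-parameter subgroups of $U$ is precisely what fails (as the introduction of the paper points out); the workable substitute is recurrence for the full horospherical group $U\cong\br^{d-1}$, with the renormalized return-time set $\limsup_i\la_i^{-1}\mT(x)$ only known to be locally Zariski dense (Lemma \ref{ZD}), and the blowup must then be run with polynomial maps on all of $\br^{d-1}$ evaluated at Zariski dense sets of return times (Lemmas \ref{lem.UgU}, \ref{lem.UgH}). Your one-variable stopping-time argument cannot be repaired by bookkeeping; it needs this structural change.

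Third, your endgame — that the accumulated transverse directions generate a Lie algebra $\mathfrak l$ with $\ov{yH}=y\exp(\mathfrak l)$, whence homogeneity and the intermediate-subgroup classification finish the proof — is asserted as ``a closing-up argument of the kind used in homogeneity proofs for unipotent flows,'' but no such off-the-shelf argument exists in this infinite-volume setting; in finite volume it rests on Ratner's theorems, which is what the present paper is avoiding. The paper's actual route is different and weaker in a crucial way: it only extracts a non-constant analytic curve $\cal C\subset U_2$ with $Y\cal C\subset X$ (Theorem \ref{mmm}), where $Y$ is a $U$-minimal set relative to $\Om$, and then concludes density not by group generation but by Shah's equidistribution theorem for expanding translates of analytic curves under $a_{-t}$ (Theorem \ref{yang}), after checking that the curve's basepoint avoids the countably many spheres in $\mS$ using the fact that $\La_1$ is not covered by countably many proper subspheres. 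Your item (i) (preventing the new direction from collapsing back into $\op{Lie}(H)$, and landing it in $\{0\}\oplus\mathfrak n^+$) is handled in the paper by the dichotomy between Proposition \ref{prop.YSY} (a semigroup in $AMU_2-M$ preserving $Y$) and Proposition \ref{prop.YvX} (a nontrivial $U_2$-translate of $Y$ inside $X$), and these require the relative-minimality formalism and Lemma \ref{lem.normalizerorbit}; none of this is present in your sketch. So while your reductions (intermediate subgroups, transitivity of $G\times G$, finiteness of the closed-orbit case) are fine, the theorem is not proved by what you have written.
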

When $\Ga$ is a lattice in $G$, i.e., when the homogeneous space
 $(\Gamma\times \Delta)\ba (G\times G )$ has finite volume, Theorem \ref{dm} is a special case of Ratner's orbit closure theorem  \cite{Ra2} and
  Mozes-Shah theorem \cite{MS2}.

  \medskip
  
\noindent{\bf On the proofs.} Any Zariski dense discrete subgroup of $G$ contains a Zariski dense Schottky subgroup
(Lemma \ref{sh}). Hence in proving Theorem \ref{mt}, we may assume without loss of generality that
$\Gamma$ is a convex cocompact Zariski dense subgroup.

  Set $Z:=(\Gamma\times \Delta)\ba (G\times G)$.
Let $\cal A=\{a_t\}$  be a one-parameter subgroup of diagonalizable elements of $G$, and  $\cal U$
  the contracting  horospherical subgroup of $G$ with respect to the choice of $\cal A$. 
  Let $U<H$ denote the diagonal embedding of $\cal U$ into $G\times G$.
Our proof is based on the study of the action of $U$ on $Z$.
Let $\Omega$ denote the subset of $Z$ consisting of all bounded $\cal A\times \cal A$-orbits, which  is a compact subset.
For $x\in \Om$,  consider the return of $xU$ to $\Omega$:
 $$\mT(x):=\{u\in U: xu\in \Om\}.$$
For any sequence $\la_i\to \infty$,  we show that
the renormalization  $$\mT_\infty:=\limsup_i \la_i^{-1} \mT(x)$$ is locally Zariski dense at $e$, i.e.,
for any neighborhood $\cal O$ of $e$ in $U$, $\mT_\infty\cap \cal O$ is Zariski dense in $U$ (Lemma \ref{ZD}).
This is the key recurrence property we use in carrying out the unipotent dynamics for the $U$-action on $Z$.
We remark that this recurrence property is much weaker than the notion of thickness used in (\cite{MMO}, \cite{MMO2}, \cite{BO2}, \cite{LO}),
where the thick return property was required  for {\it any} one-parameter subgroup of $U$; the latter strong  property does not hold
 for a general convex cocompact subgroup.

 
We prove that any closed $H$-invariant subset $X$ of $Z$, which is not a union of finitely many closed $H$-orbits, contains a $U$-minimal subset $Y$ with respect to $\Omega$ such that
$$Y\cal C\subset X$$
for some non-constant analytic curve $\cal C$ contained in $\{e\}\times \cal U$. 
We then conclude $X=Z$ using the density of translates  $x\cal Ca_{-t}\subset \Delta\ba G$
as $t\to +\infty$ (Theorem \ref{yang}); this last ingredient was proved by  Shah \cite{Sh2},
using Ratner's measure classification theorem \cite{Ra1} and the linearization techniques 
(\cite{DM}, \cite{Sh1}).\bigskip

\noindent{\bf Acknowledgement}
We would  like to thank the referee for a careful reading and helpful comments.

\section{Notations and background}
Let $G=\SO^\circ (d,1)$, $d\ge 2$.  Let $\bH^d$ denote the real hyperbolic space of dimension $d$ with boundary $\partial \bH^d=\mathbb S^{d-1}$.
 Then $G$ can be identified with the group $\op{Isom}^+(\bH^d)$ of orientation preserving isometries of $\bH^d$.
The isometric action of $G$ on $\bH^d$ extends to a transitive action of $G$ on the unit tangent bundle $\T^1(\bH^d)$.
We identify $\bH^d=G/\cal K$ and $\T^1(\bH^d)=G/\cal M$ where $\cal K$ and $\cal M$ are respectively
 the stabilizers of a point $o\in \bH^d$ and a vector $v_o\in \T^1_o(\bH^d)$. The group $G$ itself can be understood as the oriented frame bundle $\op{F}(\bH^d)$.
Let $\cal A=\{a_t: t\in \br\}$ be the one-parameter subgroup of diagonalizable elements such that
$\cal A$ centralizes $\cal M$ and the right translation action of $a_t$ on $G/\cal M$ corresponds to the geodesic flow on $\T^1(\bH^d)$.
For a tangent vector $v\in \T^1(\bH^d)$, we write $v^+$ for the forward end point of the associated geodesic in the boundary
$\mathbb S^{d-1}$ and $v^-$ for the backward end point. 
For $g\in G$, we define $$g^+:= (gv_o)^+=gv_o^+\quad\text{and}\quad g^-:= (gv_o)^- =gv_o^-.$$

We denote by $\cal U$ the contracting horospherical subgroup of $G$:
$$\cal U=\{u\in G: a_{-t} u a_t \to e,\quad \text{as $t\to +\infty$}\} .$$
The group $\cal U$ is isomorphic to $\br^{d-1}$; we write  $\cal U=\{u_{\bt}: {\bt}\in \br^{d-1}\}$.

We use the following notation in the rest of the paper:
\begin{itemize}
\item $H=\{(h,h) : h\in G\}$;
\item $H_1=G\times \{e\}$, $H_2=\{e\}\times G$,  and  $\cal H= H_1\times H_2$; 
\item $A=\{(a_t,a_t):t\in \br \}$;
\item $A_1=\cal A\times \{e\}$, $ A_2=\{e\}\times \cal A$;
\item $U= \{(u,u): u\in \cal U\}$;
\item $U_1=\cal U\times \{e\} $, $ U_2=\{e\}\times \cal U$;
\item $M=\{(m,m):m\in \cal M\}$;
\item $M_1=\cal M\times \{e\} $, and $ M_2=\{e\}\times \cal M$.
\end{itemize}

Let $\Ga_1<H_1$ be a Zariski-dense  discrete subgroup and $\Ga_2<H_2$ be a  cocompact lattice. We assume that both  $\Gamma_1$ and $\Gamma_2$
are torsion-free.
For each $i=1,2$, let $$\cal S_i:=\Ga_i\ba \bH^d$$ denote
 the associated real hyperbolic manifold, and  $\La_i\subset \mathbb S^{d-1}$ the limit set of $\Ga_i$. As $\G_2$ is a  lattice in $H_2$, we have $\La_2=\mathbb S^{d-1}$.
 We assume that $\Gamma_1$ is convex cocompact, that is, $\Gamma_1\ba \op{hull}(\Lambda_1)$ is compact where $\hull (\La_1)\subset \bH^d$
 denotes the convex hull of $\La_1$.

Set $$Z_1=\Gamma_1\ba H_1, \, \;\; Z_2=\Gamma_2\ba H_2 , \;\text{and} \;\;Z=Z_1\times Z_2.$$ 
We  define 
$$\RFM=\{x_1\in Z_1:x_1A_1\text{ bounded} \}=\{[g]\in Z_1: g^{\pm}\in \La_1\} ;$$
and
 $$\RFPM=\{x_1\in Z_1:x_1A^+_1\text{ bounded} \} =\{[g]\in Z_1: g^+\in \La_1\} $$
where $A_1^+=\{(a_s, e): s\ge 0\}$.

Define
$$\Omega=\RFM\times Z_2 \;\;\text{and}\; \; \Omega_+=\RFPM \times Z_2.$$ 

As $\Gamma_1$ is convex cocompact, $\RFM$  is a compact $A_1M_1$-invariant subset.
Hence $\Om$ is a compact subset of $Z$ which is invariant under $\prod_{i=1}^2 A_iM_i$.
The set $\RFPM$ is equal to $\RFM \cdot U_1$, and hence $\Om_+$ is  a closed subset of $Z$ invariant under $\prod_{i=1}^2 A_iM_i U_i$.

\section{Local Zariski density of renormalization of  $U$-recurrence}
We often identify $U$ with $\br^{d-1}$ via the map $(u_{\bt}, u_{\bt})\mapsto \bt$, and the notation $\|\bt \|$ means the Euclidean norm of $\bt\in \br^{d-1}$.
To ease the notation, we sometimes write $u\in U$, identifying $u$ with $(u,u)$.
Similarly we will write $a\in A$, identifying $a$ with $(a,a)$.

For $x\in \Omega$, we define the following recurrence time of $x$ to $\Omega$ under $U$:
$$\mT(x):=\{{\bt} \in \br^{d-1}: xu_{\bt} \in \Omega\}.$$

For $x=(x_1, x_2)\in \Om$, note that ${\bt}\in \mT(x)$ if and only if $x_1 u_{\bt} \in \RFM$.
If we choose $g_1\in H_1$ so that $x_1=[g_1]$, then $g_1^{\pm}\in \La_1$ since $x_1\in \RFM$.
Since $(g_1u_\bt )^+=g_1^+$, we have 
\be \label{mttt} \mT(x)=\{ {\bt} \in \br^{d-1}: (g_1 u_\bt)^-\in \La_1\}.\ee

Since $(g_1u_\bt)^-\to g_1^+\in \La_1$ as $\bt\to \infty$ and $\La_1$ has no isolated point, it follows that $\mT(x)$ is unbounded.

\begin{lem}\label{Zd}
For $x\in \Omega$, any non-empty open subset of $ \mT(x)$ is Zariski dense in $U$. \end{lem}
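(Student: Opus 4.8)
The plan is to reduce the statement to a Zariski density property of the limit set $\La_1\subset\mathbb{S}^{d-1}$ and then transport it back to $U$. Write $x_1=[g_1]$, so that by \eqref{mttt} we have $\mT(x)=\{\bt:(g_1u_\bt)^-\in\La_1\}$. Consider the map $\phi\colon \cal U\to\mathbb{S}^{d-1}$, $u_\bt\mapsto (g_1u_\bt)^-=g_1\cdot(u_\bt v_o^-)$. On one hand, $\phi$ is the composition of the polynomial map $\bt\mapsto g_1u_\bt\in G$ (polynomial because $\cal U$ is unipotent) with the orbit map $g\mapsto gv_o^-$ of $G$ onto $\mathbb{S}^{d-1}$, which is algebraic since $G$ acts algebraically on $\mathbb{S}^{d-1}$; hence $\phi$ is a morphism of real algebraic varieties $\br^{d-1}\to\mathbb{S}^{d-1}$. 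On the other hand, since $u\mapsto u v_o^-$ is an analytic diffeomorphism of $\cal U$ onto $\mathbb{S}^{d-1}\setminus\{v_o^+\}$ and $g_1$ acts as a diffeomorphism of $\mathbb{S}^{d-1}$ carrying $v_o^+$ to $g_1^+$, the map $\phi$ is an analytic diffeomorphism of $\cal U\cong\br^{d-1}$ onto $\mathbb{S}^{d-1}\setminus\{g_1^+\}$. In particular $\mT(x)=\phi^{-1}(\La_1)$, and if $O=W\cap\mT(x)$ is non-empty with $W\subset U$ open, then, $\phi$ being a bijection onto an open set, $\phi(O)=\phi(W)\cap\La_1$ is a non-empty relatively open subset of $\La_1$.

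So the key point is the claim \textbf{(★)}: \emph{every non-empty relatively open subset of $\La_1$ is Zariski dense in $\mathbb{S}^{d-1}$.} Granting (★), I finish as follows. If $O$ were not Zariski dense in $U$, then $V:=\ov O^{\,Z}\subsetneq U$ would be a proper algebraic subset of the irreducible variety $\br^{d-1}$, hence $\dim V\le d-2$; since the morphism $\phi$ does not raise dimension, $\phi(V)$ would be contained in a proper algebraic subset of the irreducible $(d-1)$-dimensional variety $\mathbb{S}^{d-1}$, and then $\phi(O)\subset\phi(V)$ would fail to be Zariski dense in $\mathbb{S}^{d-1}$, contradicting (★). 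Hence $O$ is Zariski dense in $U$.

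It remains to prove (★), in three steps. \emph{Step 1: $\La_1$ itself is Zariski dense in $\mathbb{S}^{d-1}$.} Its Zariski closure $\ov{\La_1}^{\,Z}$ is $\Ga_1$-invariant, so $\{g\in G:g\,\ov{\La_1}^{\,Z}=\ov{\La_1}^{\,Z}\}$ is a real algebraic subgroup of $G$ (as $G$ acts algebraically on $\mathbb{S}^{d-1}$) containing the Zariski dense subgroup $\Ga_1$, hence equals $G$; since $G$ acts transitively on $\mathbb{S}^{d-1}$ and $\La_1\neq\emptyset$, we get $\ov{\La_1}^{\,Z}=\mathbb{S}^{d-1}$. \emph{Step 2: there is $p\in\La_1$ such that $B\cap\La_1$ is Zariski dense in $\mathbb{S}^{d-1}$ for every neighborhood $B$ of $p$.} If not, every $p\in\La_1$ has a neighborhood $B_p$ with $\ov{B_p\cap\La_1}^{\,Z}\subsetneq\mathbb{S}^{d-1}$; as $\La_1$ is compact (this is where convex cocompactness of $\Ga_1$ enters), finitely many such $B_{p}$ cover $\La_1$, so $\mathbb{S}^{d-1}=\ov{\La_1}^{\,Z}$ is a finite union of proper closed subvarieties of the irreducible variety $\mathbb{S}^{d-1}$, which is absurd. \emph{Step 3: transport.} Let $B\cap\La_1$ be an arbitrary non-empty relatively open subset of $\La_1$. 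By minimality of the $\Ga_1$-action on $\La_1$ there is $\delta\in\Ga_1$ with $\delta p\in B$; by continuity choose a neighborhood $B'\ni p$ with $\delta B'\subset B$. Then $\delta(B'\cap\La_1)\subset B\cap\La_1$, so $\ov{B'\cap\La_1}^{\,Z}\subset \delta^{-1}\,\ov{B\cap\La_1}^{\,Z}$ since $\delta^{-1}$ acts as an algebraic automorphism of $\mathbb{S}^{d-1}$; by the choice of $p$ the left-hand side is $\mathbb{S}^{d-1}$, hence so is $\ov{B\cap\La_1}^{\,Z}$. This proves (★).

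The main obstacle is (★): one must show that a tiny relatively open piece of a possibly very thin limit set (a Cantor set of arbitrarily small Hausdorff dimension) is nonetheless Zariski dense in the whole sphere, so no measure- or dimension-theoretic notion of "size" is available; the proof must instead leverage compactness of $\La_1$ together with the minimality and algebraicity of the $\Ga_1$-action, as above. The passage between $U$ and $\mathbb{S}^{d-1}$ is then routine, once one notes that $\phi$ is simultaneously an algebraic morphism and an analytic diffeomorphism onto a Zariski open subset.
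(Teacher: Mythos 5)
Your proof is correct, and the reduction via the visual map $u_\bt\mapsto (g_1u_\bt)^-$ is the same as the paper's; the difference is in what happens afterwards. The paper stops there and cites the known fact that no non-empty open subset of $\La_1$ lies in a smooth submanifold of $\bb S^{d-1}$ of positive codimension (Winter, Cor.~3.10), whereas you observe that the visual map is also a morphism of real algebraic varieties and then prove the needed local Zariski density of $\La_1$ from scratch. Your three-step argument for (★) — global Zariski density of $\La_1$ from Zariski density of $\Ga_1$ and algebraicity of the action, existence of one ``Zariski-fat'' point by compactness of $\La_1$ plus irreducibility of $\bb S^{d-1}$, and propagation to all relatively open pieces by minimality of the $\Ga_1$-action on $\La_1$ — is sound and makes the lemma self-contained; it also works directly with Zariski closures, which is exactly the form needed (a proper subvariety need not be a single smooth submanifold, so the cited smooth-submanifold statement requires an extra stratification remark to apply verbatim). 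Two small inaccuracies, neither affecting the argument: compactness of $\La_1$ holds for any discrete subgroup (the limit set is always closed in $\bb S^{d-1}$), so convex cocompactness is not what is being used there; and in Step 2 one should say explicitly that $\bb S^{d-1}$ is irreducible as a real algebraic variety because $d-1\ge 1$ (for $d-1=0$ the sphere is reducible), which is guaranteed by the standing hypothesis $d\ge 2$.
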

\begin{proof} The visual map $ U \to \bb S^{d-1}-\{g_1^+\}$ defined by $u \mapsto (g_1u)^-$
is a diffeomorphism. Hence by \eqref{mttt},  the claim follows
the well-known fact that no non-empty open subset of $\Lambda_1$ is 
contained in a smooth submanifold in $\bb S^{d-1}$ of positive co-dimension 
(cf.  \cite[Corollary 3.10]{Win}).
\end{proof}

\begin{lemma}\label{lem.ZD}\label{ZD}
Let  $x\in \Om$.  For any sequence  $\la_i\to + \infty$, there exists $z\in \Om$ such that
 $$\mT_\infty:=\limsup_{i\to\infty}\la_i^{-1}  { \mT(x)}  \supset \mathsf T(z).$$
In particular,  for any neighborhood $\cal O\subset U$ of $e$, $\mT_\infty \cap \cal O$ is  Zariski dense  in $U$.
\end{lemma}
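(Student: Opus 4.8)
The plan is to exploit the explicit description \eqref{mttt} of $\mathsf T(x)$ as the preimage of $\Lambda_1$ under the visual map $u_\bt \mapsto (g_1u_\bt)^-$, and to track what happens to this set under the renormalizations $\la_i^{-1}(\cdot)$ as $\la_i\to\infty$. Fix $g_1\in H_1$ with $x_1=[g_1]$, so $g_1^\pm\in\Lambda_1$. The rescaling $\bt\mapsto \la_i^{-1}\bt$ corresponds on the level of $H_1$ to conjugation by $a_{t_i}$ with $e^{t_i}=\la_i$: concretely, $u_{\la_i^{-1}\bt}=a_{-t_i}u_\bt a_{t_i}$, so $\la_i^{-1}\mathsf T(x)=\{\bt: (g_1 a_{-t_i} u_\bt a_{t_i})^-\in\Lambda_1\}=\{\bt: (g_1 a_{-t_i} u_\bt)^-\in\Lambda_1\}$ since the $a_{t_i}$ on the right fixes the backward endpoint. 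In other words, writing $g_1^{(i)}:=g_1 a_{-t_i}$, we have exactly $\la_i^{-1}\mathsf T(x)=\mathsf T([g_1^{(i)}])$ for the point $[g_1^{(i)}]\in Z_1$; note $(g_1^{(i)})^\pm = g_1^\pm \in \Lambda_1$, so $[g_1^{(i)}]\in \RFM$ and $([g_1^{(i)}],x_2)\in\Omega$.

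First I would pass to a subsequence along which $[g_1^{(i)}]$ converges in the compact set $\RFM$; by compactness of $\Omega$ we may also arrange $([g_1^{(i)}],x_2)\to z=(z_1,x_2)\in\Omega$, with $z_1=[h_1]$ for some $h_1$ a limit of the $g_1^{(i)}$ (up to the $\Gamma_1$-action). The heart of the argument is then a semicontinuity statement: $\limsup_i \mathsf T([g_1^{(i)}]) \supset \mathsf T(z)$. This follows because $\mathsf T(z)=\{\bt: (h_1u_\bt)^-\in\Lambda_1\}$ is characterized by $h_1 u_\bt$ lying in the support of the Patterson--Sullivan / limit-set condition, and if $(h_1u_\bt)^-\in\Lambda_1$ then since $\Lambda_1$ is closed and $(g_1^{(i)}u_\bt)^-\to (h_1u_\bt)^-$ one needs to produce, for each such $\bt$, a nearby $\bt_i\in\mathsf T([g_1^{(i)}])$ — here I would use that $\Lambda_1$ has no isolated points together with the fact that the visual maps are local diffeomorphisms converging uniformly on compacta, so that points of $\mathsf T(z)$ are approximated by points of $\mathsf T([g_1^{(i)}])$. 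This gives $\mathsf T_\infty = \limsup_i \la_i^{-1}\mathsf T(x) \supset \mathsf T(z)$.

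For the final assertion, since $z\in\Omega$, Lemma \ref{Zd} applies to $z$: every non-empty open subset of $\mathsf T(z)$ is Zariski dense in $U$. Given any neighborhood $\mathcal O$ of $e$, we know $\mathsf T(z)\cap\mathcal O$ contains a non-empty open subset of $\mathsf T(z)$ — indeed $\mathsf T(z)=\{\bt:(h_1u_\bt)^-\in\Lambda_1\}$ is the preimage of $\Lambda_1$ under a diffeomorphism, $h_1^-\in\Lambda_1$ gives $0\in\mathsf T(z)$, and $\Lambda_1$ is the closure of its own (relative) interior-free but perfect set, so near $u=e$ the set $\mathsf T(z)$ accumulates and $\mathsf T(z)\cap\mathcal O$ has non-empty interior in $\mathsf T(z)$ by the local structure of $\Lambda_1$ — hence $\mathsf T(z)\cap\mathcal O$, and therefore $\mathsf T_\infty\cap\mathcal O$, is Zariski dense in $U$.

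The main obstacle I anticipate is the semicontinuity step $\limsup_i \mathsf T([g_1^{(i)}]) \supset \mathsf T(z)$: one must upgrade mere convergence of the basepoints $g_1^{(i)}\to h_1$ and closedness of $\Lambda_1$ into the genuine approximation of \emph{every} point of $\mathsf T(z)$ by points of $\mathsf T([g_1^{(i)}])$, which requires the non-isolation (perfectness) of $\Lambda_1$ in an essential way — a point $\bt$ with $(h_1u_\bt)^-\in\Lambda_1$ could a priori be an "isolated-direction" phenomenon that the perturbed sets miss, and ruling this out is exactly where the geometry of convex cocompact limit sets enters. Everything else (the conjugation identity relating rescaling to the $A_1$-flow, compactness of $\RFM$, and the appeal to Lemma \ref{Zd}) is routine.
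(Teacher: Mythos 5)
Your overall route is the paper's: identify $\la_i^{-1}\mathsf T(x)$ with the return times of a geodesic-flow translate of $x$, use compactness of $\Om$ (via compactness of $\RFM$) to extract a limit $z$, prove the semicontinuity $\limsup_i \mathsf T(xa_{s_i})\supset \mathsf T(z)$, and finish with Lemma \ref{Zd}. But that semicontinuity statement is the entire content of the lemma, and you do not prove it: you describe it as ``the main obstacle I anticipate'' and gesture at perfectness of $\La_1$ and ``the geometry of convex cocompact limit sets''. That is a genuine gap, and the tools you point to are not the ones that close it. What is needed is that, for $\bt\in\mathsf T(z)$ and any $c>0$, the image of the ball $\op{B}(\bt,c)$ under the perturbed visual map $\bs\mapsto(\ga_i g_1 a_{s_i}u_{\bs})^-$ meets $\La_1$ for all large $i$, where $\ga_i\in\Ga_1$ is chosen so that $\ga_i g_1 a_{s_i}\to g_1'$ with $z_1=[g_1']$. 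The paper gets this softly, using neither perfectness nor convex cocompactness (the latter enters only through compactness of $\RFM$): since the limit visual map $u\mapsto(g_1'u)^-$ is a diffeomorphism onto the complement of a point, the limit image $(g_1'u_{\op{B}(\bt,c)})^-$ is a neighborhood of $(g_1'u_\bt)^-$, and the perturbed images converge to it, so for all large $i$ they contain the point $(g_1'u_\bt)^-$ itself, which lies in $\La_1$ precisely because $\bt\in\mathsf T(z)$. Choosing $\mathbf{r}_i$ of minimal norm with $(\ga_i g_1 a_{s_i}u_{\bt+\mathbf{r}_i})^-\in\La_1$ then forces $\mathbf{r}_i\to 0$ and $\bt+\mathbf{r}_i\in\la_i^{-1}\mathsf T(x)$, which is the claim. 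Your worry about an ``isolated-direction'' point of $\mathsf T(z)$ being missed is resolved by exactly this openness/stability argument, not by non-isolation of points of $\La_1$.

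Two smaller corrections. First, the direction of the flow: with the paper's convention that $\cal U$ is contracted by $a_{-t}$ as $t\to+\infty$, one has $\la_i^{-1}\mathsf T(x)=\mathsf T(xa_{s_i})$ with $s_i=\tfrac12\log\la_i\to+\infty$; your basepoint $g_1^{(i)}=g_1a_{-t_i}$ satisfies $\mathsf T([g_1a_{-t_i}])=\la_i\mathsf T(x)$, the opposite renormalization, so the translate must be forward, not backward (harmless once fixed, since $\RFM$ is compact and $A_1$-invariant in both directions, and $\mathsf T$ depends only on the first coordinate). Second, the final assertion is simpler than you make it: $z\in\Om$ gives $0\in\mathsf T(z)$, so $\mathsf T(z)\cap\cal O$ is a nonempty relatively open subset of $\mathsf T(z)$ and Lemma \ref{Zd} applies at once; the digression about perfectness of $\La_1$ is unnecessary there as well.
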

\begin{proof} 
Note that  $\la_i^{-1} { \mT(x)}=\{{\bt}\in \br^{d-1}: x u_{\la_i \bt}\in \Om\}$.
Let $s_i=\tfrac{1}{2} \log \la_i$ so that $a_{s_i} u_{{\bt}}a_{s_i}^{-1}=u_{ \la_i {\bt}}$.
Since $\Om$ is $A$-invariant,
$$\la_i^{-1}  { \mT(x)} =\{{\bt}\in \br^{d-1} : xa_{s_i}  u_{ \bt}\in \Om\} =\mT(xa_{s_i}).$$
Since $\Om$ is a compact $A$-invariant subset, passing to a subsequence, $xa_{s_i}$ converges to some $z\in\Omega$ as $i\to\infty$.

We claim that
$\limsup_{i\to \infty} \la_i^{-1} { \mT(x)}
\supset \mT(z).$
Let $x=(x_1,x_2)$, $z=(z_1,z_2)$, and 
choose $g_1, g_1'\in G$ so that $x_1=[g_1]$ and $z_1=[g_1']$.
Since $x_1a_{s_i}\to z_1$ as $i\to \infty$, there exists $\ga_i\in\Ga_1$ such that $\ga_ig_1a_{s_i}\to g_1'$ as $i\to\infty$.
Let $\bt\in \mathsf{T}(z)$.
For each $i$, choose $\textbf{r}_i\in\bb{R}^{d-1}$ of minimal Euclidean norm in the set
$\{\textbf{r}\in\bb{R}^{d-1} : (\ga_ig_1a_{s_i}u_{\bt+\textbf{r}})^-\in\La_1\}$.
We claim that $\textbf{r}_i\to 0$ as $i\to\infty$.
Suppose not. Then there exists $c>0$ such that $B_i(c)\cap\La_1=\emptyset$ for infinitely many $i$, where $\op{B}(\bt,c)
\subset \bb{R}^{d-1}$ denotes the closed ball of radius $c$  centered at $\bt$, $u_{\op{B}(\bt,c)}=\{u_{\bold s}:\bold s\in \op{B}(\bt,c)\}$
and $B_i(c):=(\ga_ig_1a_{s_i}u_{\op{B}(\bt,c)})^-$.
Since $B_i(c)$  converges to $B(c):=(g_1'u_{\op{B}(\bt,c)})^-$ in the Hausdorff topology of closed subsets of $\bb{S}^{d-1}$
and $B(c)$ contains a neighborhood of $(g_1'u_{\bt})^-$, $B_i(c)$ must contain $(g_1'u_{\bt})^-$ for all  sufficiently large $i$.
Since  $(g_1'u_{\bt})^-\in\La_1$ (because $\bt\in \mathsf{T}(z)$), we get a contradiction to the hypothesis that $B_i (c)\cap\La_1=\emptyset$ for infinitely many $i$'s.

Hence $\bt_i:=\bt+\textbf{r}_i \to  \bt$, and  $\bt_i\in \la_i^{-1}\mathsf{T}(x)$ since
\begin{align*}
(\ga_ig_1u_{\la_i\bt_i})^-&=(\ga_ig_1a_{s_i}u_{\bt_i}a_{-s_i})^-=(\ga_ig_1a_{s_i}u_{\textbf{t}_i})^-\in\La_1.
\end{align*}
This shows that $\bt\in \limsup \la_i^{-1}\mathsf{T}(x)$,
proving the first claim.
The second claim follows from the first claim together with Lemma \ref{Zd}.  \end{proof}

\section{Unipotent blowup}
 For a subgroup $S<{\cal H}$, we denote by $\op{N}(S)$ the normalizer of $S$ in $\cal H$.
For a subgroup $S_i\subset H_i$,   $\op{C}_{H_i}(S_i)$ denotes the centralizer of $S_i$ in $H_i$.
\begin{lemma} We have
$\op{N}(U)=AMU_1U_2$.
\end{lemma}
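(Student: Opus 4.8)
The plan is to compute the normalizer of $U$ in $\mathcal H = H_1\times H_2$ by working separately in each factor $H_i \cong G$, since $U = \{(u,u): u\in\mathcal U\}$ sits diagonally and an element $(g_1,g_2)\in\mathcal H$ normalizes $U$ if and only if conjugation by $(g_1,g_2)$ sends each $(u,u)$ to some $(u',u')$ — in particular $g_1 u g_1^{-1}$ and $g_2 u g_2^{-1}$ must be equal elements of $\mathcal U$ for all $u\in\mathcal U$. So the first step is to reduce to the following claim: if $g_1\mathcal U g_1^{-1} = g_2\mathcal U g_2^{-1} = \mathcal U$ and moreover $g_1 u g_1^{-1} = g_2 u g_2^{-1}$ for all $u\in\mathcal U$, then $(g_1,g_2)\in AMU_1U_2$. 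The containment $AMU_1U_2 \subseteq \op{N}(U)$ is the easy direction: $A$ and $M$ normalize $\mathcal U$ diagonally (conjugation by $a_t$ scales $\bt$, conjugation by $m$ rotates $\bt$, in both cases the same way in each coordinate), and $U_1$, $U_2$ centralize $U$ componentwise because $\mathcal U$ is abelian; so each generator normalizes $U$.

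For the reverse containment, the key input is that $\op{N}_G(\mathcal U) = \mathcal{A}\mathcal{M}\mathcal{U} = P$ is the minimal parabolic subgroup of $G$ (standard for $\SO^\circ(d,1)$; $\mathcal U$ is the unipotent radical of $P$ and its normalizer is $P$ itself). Thus from $g_i\mathcal U g_i^{-1}=\mathcal U$ we get $g_1, g_2 \in P = \mathcal{A}\mathcal{M}\mathcal{U}$. Write $g_i = a_{t_i} m_i u_{i}$ with $a_{t_i}\in\mathcal A$, $m_i\in\mathcal M$, $u_i\in\mathcal U$. Next I examine the condition that conjugation by $g_1$ and by $g_2$ induce the \emph{same} automorphism of $\mathcal U$. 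Conjugation by $u_i$ is trivial on $\mathcal U$ (abelian), so the automorphism induced by $g_i$ is just $u_\bt \mapsto u_{e^{2t_i} R_{m_i}\bt}$ where $R_{m_i}\in\SO(d-1)$ is the rotation by which $m_i$ acts on $\mathcal U\cong\br^{d-1}$. Requiring these to agree for $i=1,2$ forces $e^{2t_1}R_{m_1} = e^{2t_2}R_{m_2}$ as linear maps; comparing determinants (or norms) gives $t_1 = t_2$, and then $R_{m_1}=R_{m_2}$. Since $\mathcal M$ acts faithfully on $\mathcal U$ — the kernel of $\mathcal M\to\SO(d-1)$ is trivial, as $\mathcal M\cong\SO(d-1)$ for this $G$ — we get $m_1 = m_2$. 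Setting $a := a_{t_1}=a_{t_2}$ and $m:=m_1=m_2$, we have
\[
(g_1,g_2) = (a m u_1,\, a m u_2) = (am,am)\cdot(u_1, u_2) = (am,am)\cdot(u_1,e)\cdot(e,u_2) \in AM\, U_1 U_2,
\]
using that $(am,am)\in AM$ and that $(u_1,e)\in U_1$, $(e,u_2)\in U_2$ commute with each other. This proves $\op{N}(U)\subseteq AMU_1U_2$.

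The main obstacle — really the only place any care is needed — is the two faithfulness facts: that $\op{N}_G(\mathcal U)$ is exactly the minimal parabolic $P$ (so $g_i\in P$), and that $\mathcal M = \mathcal{C}_G(\mathcal A)\cap\mathcal K$ acts faithfully on $\mathcal U$ with image the full rotation group, so that the equation $e^{2t_1}R_{m_1}=e^{2t_2}R_{m_2}$ pins down $t_i$ and $m_i$ uniquely. Both are classical structural facts about $\SO^\circ(d,1)$ and can be cited or verified in the standard matrix model; once they are in hand the argument is the short computation above. One should also double-check the boundary case $d=2$, where $\mathcal M$ is trivial (or finite) and $\mathcal U\cong\br$: there $\op{N}(U) = A U_1 U_2$, consistent with the formula since $M$ is then trivial, so no separate treatment is needed.
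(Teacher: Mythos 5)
Your proof is correct and rests on the same two structural facts the paper's proof uses: that the normalizer of $\mathcal U$ in $G$ is the minimal parabolic $\mathcal A\mathcal M\mathcal U$, and that $g_1$ and $g_2$ must induce the same automorphism of $\mathcal U$, which forces $g_2^{-1}g_1\in\op{C}_{G}(\mathcal U)=\mathcal U$. The paper packages this more compactly via the factorization $(g_1,g_2)=(g_2,g_2)\cdot(g_2^{-1}g_1,e)$ with $(g_2,g_2)\in\op{N}(U)\cap H=AMU$, thereby avoiding your explicit $\mathcal A\mathcal M\mathcal U$-decomposition and determinant comparison, but the substance is the same.
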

\begin{proof}
The inclusion $\supset $ is clear.
To show the reverse inclusion $\subset$, let $(g_1,g_2)\in\op{N}(U)$.
Then   for all $(u,u)\in U$, $(g_1ug_1^{-1},g_2ug_2^{-1})\in U$ and hence $g_2^{-1}g_1ug_1^{-1}g_2=u$.
This implies $(g_2^{-1}g_1,e)\in\op{C}_{H_1}(U_1)$.
Since $\op{C}_{H_1}(U_1)\subset\op{N}(U)$, 
\begin{equation*}
(g_1,g_2)=(g_2,g_2)\cdot(g_2^{-1}g_1,e)\in\op{N}(U),
\end{equation*}
it follows $(g_2,g_2)\in\op{N}(U)\cap H =AMU$.
As both $(g_2,g_2)$ and $(g_2^{-1}g_1,e)$ belong to $AMU_1U_2$, so is $(g_1,g_2)$.
\end{proof}

\begin{lemma}\label{lem.UgU}
Let $g_i\to e$ in $\cal H-\op{N}(U)$ as $i\to\infty$, and
$x\in \Om$.  Then for any neighborhood $\cal O\subset \cal H$ of $e$,
 there exist sequences  $u_i'\in U$ and $u_i\in {}{\mathsf T(x)}$ such that, as $i\to \infty$,
the sequence $u_i'g_iu_i$ converges to an element in $(AMU_2-M)\cap \cal O$.

\end{lemma}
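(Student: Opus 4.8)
The plan is to run a standard "unipotent blow‑up" (or "polynomial drift") argument adapted to the infinite‑volume setting. Write $g_i \to e$ in $\cal H - \op{N}(U)$. For each $i$, consider the map $\bt \mapsto g_i^{-1} u_{-\bt} g_i u_{\bt}$; after conjugating by $g_i$ this measures how far $g_i$ fails to normalize the one‑parameter unipotent directions in $U$, and it is a polynomial (of bounded degree, since $U$ is the horospherical subgroup for $A$ and $\cal H = H_1\times H_2$ is a product of copies of $G$) in $\bt$ with coefficients tending to $0$ as $i\to\infty$. The key point, as in the work of Dani–Margulis and Margulis–Tomanov, is that by choosing $\bt_i$ on the boundary of a suitably growing region one can arrange that $u_i' := (\text{a correction in }U)$ and $u_i := u_{\bt_i}$ satisfy: $\bt_i\to\infty$, yet $u_i' g_i u_i$ stays in a fixed compact neighborhood of $e$ and converges (along a subsequence) to some element $q\in \cal H - \{e\}$. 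The structure of the polynomial forces the limit $q$ to lie in the subgroup that centralizes the relevant unipotent direction modulo $U$, which here is $AMU_2$; and since $g_i\notin \op{N}(U)=AMU_1U_2$, the "new" displacement cannot be absorbed into $U$ or into the $U_1$‑part, so $q\notin M$, giving $q\in (AMU_2 - M)$. Shrinking $\cal O$ and passing to a further subsequence puts $q\in (AMU_2-M)\cap\cal O$.

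First I would set up the polynomial $\psi_i(\bt) := g_i u_\bt g_i^{-1}\in \cal H$, written in coordinates via the exponential map, and record that its coefficients $\to 0$ while its degree is bounded; this uses only that $U$ is unipotent and $\cal H$ is a linear group. Second, I would perform the drift: choose $R_i\to\infty$ and pick $\bt_i$ with $\|\bt_i\| \asymp R_i$ lying where the displacement first exits a fixed small ball, so that $g_i u_{\bt_i}$ decomposes (by the product structure near $e$ in $\cal H = U \cdot (AMU_2) \cdot (\text{stuff})$, or more precisely using a transversal to $U$) as $u_i'^{-1}\, q_i\, u_{\bt_i}^{-1}\cdot(\text{small})$ with $q_i$ bounded and bounded away from $e$. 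Third — and this is where $\Om$ enters — I must ensure the $\bt_i$ I pick actually lie in $\mathsf T(x)$, i.e. $x u_{\bt_i}\in\Om$. This is exactly what Lemma \ref{ZD} (and Lemma \ref{Zd}) is for: the renormalized recurrence set $\mT_\infty = \limsup \la_i^{-1}\mT(x)$ is locally Zariski dense at $e$, so after rescaling by $a_{s_i}$ (with $\la_i$ matched to $R_i$) the recurrence set is Zariski dense near $e$, hence meets the (Zariski‑constructible, positive‑dimensional) locus of admissible drift parameters; intersecting the two gives a genuine choice of $u_i\in\mathsf T(x)$ of the required size.

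Fourth, I would extract a convergent subsequence $u_i' g_i u_i \to q$ and identify $q$. Writing everything in coordinates, $u_i' g_i u_i = u_i'\,\psi_i(\bt_i)^{?}\dots$ — more cleanly, $u_i' g_i u_i = (u_i' g_i u_{\bt_i} g_i^{-1})\, g_i$, and since $g_i\to e$ the limit is $\lim u_i'\psi_i(\bt_i)$; the coordinate analysis shows the $U$‑component is killed by the choice of $u_i'$, the $U_1$‑component vanishes because its growth rate would dominate (contradicting boundedness) unless $g_i$ already lies in $U_1 \subset \op{N}(U)$, and what survives lives in $AMU_2$. Finally, $q\in M$ would force, tracing back, that $g_i\in \op{N}(U)\cdot(\text{small})$ eventually, contradicting $g_i\notin\op{N}(U)$ together with $g_i\to e$ and the closedness argument; hence $q\in AMU_2 - M$.

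The main obstacle I expect is Step three: coordinating the two independent "growth" requirements — the drift parameter $\bt_i$ must grow at the rate dictated by the polynomial $\psi_i$ so that $u_i'g_iu_i$ converges to something nontrivial, while simultaneously $\bt_i$ must be a recurrence time for $x$ under $U$. In the classical (finite‑volume or "thick return") setting one has enough recurrence along every direction to freely choose $\bt_i$; here one only has the weak local Zariski density of $\mT_\infty$ from Lemma \ref{ZD}. The resolution is that the set of admissible drift directions (those producing a prescribed nonzero limit in $AMU_2-M$) is itself Zariski dense — indeed it is the complement of a proper subvariety cut out by the leading coefficients of $\psi_i$ — so a Zariski‑dense recurrence set must meet it; quantifying this at the right scale $\la_i$, and checking the limit genuinely lands outside $M$, is the delicate part of the argument.
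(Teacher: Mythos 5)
Your overall strategy coincides with the paper's: the Margulis--Tomanov quasi-regular map construction, rescaled at the critical radius $\la_i$ where the polynomial displacement first exits a fixed ball, combined with Lemma \ref{ZD} to place the drift times $\bt_i$ in $\mT(x)$ at the scale $\la_i$ (and the local Zariski density at $e$ to keep the limit inside the prescribed $\cal O$). The setup, the rescaling, and the identification of the limit inside $AMU_2=\op{N}(U)\cap L$ are all as in the paper, modulo your choice to work with conjugation rather than with the orbit map in a Chevalley representation.

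The one step that does not go through as written is the exclusion of $M$ from the limit. Your argument in Step four --- that $q\in M$ would force $g_i\in\op{N}(U)\cdot(\text{small})$, ``contradicting $g_i\notin\op{N}(U)$'' --- proves nothing: since $g_i\to e\in\op{N}(U)$, the $g_i$ are eventually in every neighborhood of $\op{N}(U)$, and $\cal H-\op{N}(U)$ is open rather than closed, so no contradiction arises; indeed for a fixed sequence $g_i$ a poor choice of $\bt$ could well produce a limit in $M$ (even the limit $e$, if $\phi(\bt)=p$). What saves the day --- and what you correctly name in your closing paragraph as the delicate point, without carrying it out --- is that the bad parameters form a proper Zariski-closed subset which the Zariski-dense set $\mT_\infty\cap\cal O'$ must avoid. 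The paper implements this concretely: take a Chevalley representation $\cal H\to\op{GL}(W)$ with $U=\op{Stab}(p)$, work with the polynomial $\phi(\bt)=\lim_i p\,g_iu_{\la_i\bt}$, choose an $M$-invariant norm on $W$, and pick $\bt\in\mT_\infty\cap\cal O'$ with $\norm{\phi(\bt)}^2\neq\norm{p}^2$ (possible because $\norm{\phi(\cdot)}^2$ is a polynomial not identically $\norm{p}^2$ and $\mT_\infty\cap\cal O'$ is Zariski dense). Then $\phi(\bt)=p\,\psi(\bt)$ with $\psi(\bt)\in AMU_2$, and $\psi(\bt)\in M$ would give $\norm{\phi(\bt)}=\norm{p}$, a contradiction. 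Some such device is needed to complete your Step four; as it stands, the non-membership in $M$ is asserted but not proved.
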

\begin{proof}
Following \cite{MT}, we will construct a quasi-regular map
\begin{equation*}
\psi : \br^{d-1} \to  AMU_2\end{equation*}
associated to the given  sequence $g_i$. 
 Since $U$ is a real algebraic subgroup of $\cal H$, by Chevalley's theorem, there exists an $\br$-regular representation ${{\cal H}}\to \op{GL}(W)$ with a distinguished point $p\in W$ such that $U=\op{Stab}(p)$. Then $p{{\cal H}}$ is locally closed, and $\op{N}(U)$ is equal to the set
\begin{equation}\label{eq.char1}
\{g\in {{\cal H}} : p gu =p g\text{ for all }u\in U\}.
\end{equation}

Set $L:=A_1M_1U_1^+\times A_2M_2U_2^+U_2$ where $U_i^+$ is the expanding horospherical subgroup of $H_i$ for $i=1,2$. Note that
$$\op{N}(U)\cap L=AMU_2$$
and that the product map from ${}{U}\times L$ to $\cal H$ is a diffeomorphism onto a Zariski open neighborhood
 of $e$. 
 
 Since $p{{\cal H}}$ is open in its closure and $pL$ is a open neighborhood of $p$ in $p{\cal H}$, we can choose an $M$-invariant norm on $W$
 such that
\begin{equation}\label{eq.m1}
B(p,1)\cap \cl{p{{\cal H}}}\subset p {{L}}
\end{equation}
 where $B(p, r)\subset W$ denotes the norm  ball of radius $r$ centered at $p$.

  For each $i$, we define
 $\tilde \phi_i: \br^{d-1}\to W$ by
 $$\tilde \phi_i(\bt )=p g_i u_{\bt} $$
 which is  a polynomial map in $d-1$-variables
with degree uniformly bounded for all $i$. Note that  $\tilde \phi_i (0)$ converges to $ p$ as $i\to \infty$.
As $g_i\not\in\op{N}(U)$, $\tilde \phi_i$ is non-constant.

Now define
\begin{equation*}
\la_i:=\sup\{\la\ge 0: \tilde \phi_i({}{B(\la)})\subset B(p,1)\}
\end{equation*}
where  $B(\la )$ denotes the norm ball of radius $\la$ centered at $0$ in $\br^{d-1}$.
Note that $\la_i<\infty$ as $\tilde \phi_i$ is nonconstant, and that $\la_i\to\infty$ as $g_i\to e$.
Reparametrizing  $\tilde \phi_i$ by $\la_i$, we define $\phi_i: \br^{d-1}\to W$:
$$\phi_i (\bt ):=\tilde \phi_i(\lambda_i \bt).$$

Note that $\sup\{ \|\phi_i (\bt) -p \|: \bt\in B(1)\}=1$, and $\lim_{i\to \infty} \phi_i(0)=p$.
Since the polynomials  $\phi_i$ have uniformly bounded degree, it follows that
 after passing to a subsequence,
 $\phi_i$ converges to a non-constant polynomial $\phi:\br^{d-1}\to W$ uniformly on every compact subset of $\br^{d-1}$.

Since $pL$ is a Zariski open neighborhood of $p$ in $\overline{p\cal H}$,
 the following map $\psi$ defines  a non-constant rational map  on a Zariski open  neighborhood
 of $0$ in $\br^{d-1}$:
 $$\psi:=\rho_L^{-1} \circ \phi$$
where $\rho_L$ is the restriction to $L$ of the orbit map $g\mapsto p g$.

Since $g_i\to e$, without loss of generality,  we may assume  that $g_i\in  U L$ for all $i$.
  Except for a Zariski closed subset of $\br^{d-1}$,
 the product $g_i u_\bt$ can be written as an element of $ {}{U}L$ in a unique way. 
 We denote by $\psi_i(\bt )\in L$ its $L$-component
 so that $g_i {}{u_{\bt}}\in U\psi_i (\bt).$
 
We have  $\psi(0)=e$ and
$$\psi(\bt)=\lim _{i \to \infty} \psi_i(\la _i \bt)$$
where  the convergence is uniform on compact subsets of $\br^{d-1}$. It is easy to check that
 $\op{Im} \psi \subset \op{N}(U)\cap L= AMU_2$ using \eqref{eq.char1}.
Set \begin{equation*}
\mathsf T_\infty := \limsup\limits_{i\to\infty} \la_i^{-1} {\mathsf T(x)}.
\end{equation*}

Given neighborhood $\cal O\subset \cal H$ of $e$,
 let $\cal O'$ be a neighborhood of $0$ in $\br^{d-1}$ such that $\phi(\cal O')\subset p \cal O$. Since $\phi$ is a nonconstant polynomial,
it follows from Lemma \ref{ZD} that
 there exists ${\bt}\in \cal O' \cap\mathsf{T}_\infty$ such that $\norm{\phi({\bt})}^2\neq\norm{p}^2$.

Let ${\bt}_i\in {}{\mathsf T(x)}$ be a sequence such that $\la_i^{-1} {\bt}_i \to {\bt}$ as $i\to\infty$
(by passing to a subsequence). Since $\psi_i\circ \la_i\to \psi$ uniformly on compact subsets,
\begin{equation*}
\psi({\bt})=\lim_{i\to\infty} (\psi_i\circ \la_i )\left(\la_i^{-1} {{\bt}_i} \right)=\lim_{i\to \infty} \psi_i({\bt}_i)
=\lim_{i\to\infty}u_{{\bs}_i}g_iu_{{\bt}_i}
\end{equation*}
for some sequence $\bs_i\in \br^{d-1}$.
Note that  $\phi({\bt})=p\psi({\bt})$ with $\psi({\bt})\in AMU_2\cap \cal O$. 
Since $\norm{\phi({\bt} )}^2\neq\norm{p}^2$ and $\|\cdot \|$ is $M$-invariant, we have $\psi({\bt})\not\in M$.
Hence this finishes the proof. \end{proof}


\begin{lemma}\label{lem.UgH}
Let $r_i\to e$ in $H_2 -\op{N}(U)$. For any $x\in \Om$,
there exists  a sequence $\bt_i\in \mathsf{T}(x)$  such that  the sequence $u_{-\bt_i} r_iu_{\bt_i} $
converges to a non-trivial  element of $ U_2$.
\end{lemma}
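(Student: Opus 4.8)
The plan is to reuse, essentially verbatim, the unipotent-blowup construction from the proof of Lemma~\ref{lem.UgU} applied to the sequence $g_i:=r_i$, together with the simple observation that conjugation by the \emph{diagonal} group $U$ preserves $H_2$. Write $r_i=(e,h_i)$ with $h_i\to e$ in $G$. Then $u_{-\bt}r_iu_\bt=(e,\,u_{-\bt}h_iu_\bt)\in H_2$ for every $\bt\in\br^{d-1}$; moreover, for $\bt$ outside a proper Zariski-closed subset of $\br^{d-1}$ the element $u_{-\bt}h_iu_\bt$ lies in the $H_2$-factor $A_2M_2U_2^+U_2$ of $L$ (an open dense subset of $G$ containing $e$, and containing $h_i$ for large $i$), so that $r_iu_\bt=u_\bt\cdot(u_{-\bt}r_iu_\bt)$ is precisely the $U\cdot L$-factorization of $r_iu_\bt$. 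In other words, in the present situation the $L$-component map $\psi_i$ from the proof of Lemma~\ref{lem.UgU} is literally the conjugation map $\psi_i(\bt)=u_{-\bt}r_iu_\bt$, and it takes values in $H_2$.

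First I would run the renormalization of Lemma~\ref{lem.UgU}: since $r_i\notin\op{N}(U)$ the polynomial maps $\bt\mapsto p\,r_iu_\bt$ are non-constant of bounded degree, so the scaling exponents $\la_i$ are finite, and $\la_i\to\infty$ since $r_i\to e$. Passing to a subsequence produces the non-constant quasi-regular limit $\psi$, defined on a Zariski-open neighbourhood of $0$, with $\psi(0)=\lim_i r_i=e$, with $\psi(\bt)=\lim_i u_{-\la_i\bt}\,r_i\,u_{\la_i\bt}$ uniformly on compacta, and with $\op{Im}\psi\subset\op{N}(U)\cap L=AMU_2$. The key point is then that each $\psi_i(\la_i\bt)\in H_2$ and $H_2$ is closed, so $\op{Im}\psi\subset H_2$ as well; hence $\op{Im}\psi\subset AMU_2\cap H_2=U_2$. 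Since $\psi$ is non-constant with $\psi(0)=e$, the set $\{\bt:\psi(\bt)=e\}$ is a proper Zariski-closed subset of the domain of $\psi$.

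Finally I would bring in the recurrence. Applying Lemma~\ref{ZD} to this same sequence $\la_i\to\infty$, the set $\mT_\infty=\limsup_i\la_i^{-1}\mT(x)$ is Zariski dense in $U$ in every neighbourhood of $0$. Choosing such a neighbourhood $\cal O$ inside the domain of $\psi$, I can pick $\bt\in\mT_\infty\cap\cal O$ with $\psi(\bt)\ne e$, because $\{\psi=e\}$ is a proper subvariety. Unwinding the definition of $\limsup$ gives, along a subsequence, points $\bt_i\in\mT(x)$ with $\la_i^{-1}\bt_i\to\bt$; then uniform convergence on compacta yields $u_{-\bt_i}r_iu_{\bt_i}=\psi_i(\bt_i)\to\psi(\bt)\in U_2-\{e\}$, which is the assertion.

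I do not expect a serious obstacle once Lemma~\ref{lem.UgU} is in hand: the genuinely new input is the single observation that the blowup of the conjugation map is confined to $U_2$ rather than spread over all of $AMU_2$, and this is exactly where the hypothesis $r_i\in H_2$ (combined with $U$ being the diagonal, hence $U$-conjugation preserving $H_2$) is used; it is also what forces the limit to be a \emph{nontrivial} element of $U_2$ specifically. The one point requiring care is that $\psi$ is only a rational map, so the neighbourhood handed to Lemma~\ref{ZD} must be chosen inside the domain of $\psi$ before the point $\bt$ is extracted.
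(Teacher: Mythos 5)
Your proof is correct and reaches exactly the assertion of the lemma, but it takes a different technical route from the paper's. The paper does not recycle the Chevalley-representation machinery of Lemma \ref{lem.UgU} here; it works at the Lie-algebra level, writing $r_i=\exp q_i$ with $q_i\in\op{Lie}(H_2)$ and blowing up the everywhere-defined polynomial maps $\bt\mapsto u_{\bt}^{-1}q_iu_{\bt}$, which are nonconstant because $H_2\cap\op{N}(U)=\op{C}_{H_2}(U_2)=U_2$. The image of the limit polynomial $\phi$ is shown to lie in $\op{Lie}(U_2)$ by a direct computation that $\phi(\bt)$ is centralized by $U_2$ (using $\la_i^{-1}\bs\to 0$), and the lemma follows by exponentiating $\phi(\bt)\ne 0$ at a point of $\mT_\infty$ supplied by Lemma \ref{ZD}. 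Your version instead reuses the quasi-regular map $\psi=\rho_L^{-1}\circ\phi$ from Lemma \ref{lem.UgU} and substitutes for the centralizer computation the two observations that the $L$-component of $r_iu_\bt$ equals $u_{-\bt}r_iu_\bt$ (valid: this conjugate lies in $H_2$ since $H_2$ is normal, and $H_2\cap L$ is Zariski open and dense in $H_2$, so off a proper subvariety of $\bt$'s the identity $r_iu_\bt=u_\bt\cdot(u_{-\bt}r_iu_\bt)$ is the unique $U\times L$ factorization) and that $\op{Im}\psi\subset AMU_2\cap H_2=U_2$ because $H_2$ is closed. Both arguments finish identically, picking $\bt\in\mT_\infty$ off the proper subvariety where the limit is trivial. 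The paper's route is more self-contained and avoids the domain-of-definition care you correctly flag at the end, since its blowup consists of polynomials defined on all of $\br^{d-1}$; yours is more economical, in that no second blowup construction has to be set up.
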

\begin{proof} 
Write $r_i=\exp q_i$ for $q_i\in \mathfrak{h}= \op{Lie}(H_2)$. We write $U_2=\{u_{\bt}:\bt\in \br^{d-1}\}$.
Define a polynomial map $\psi_i : \br^{d-1} \to \mathfrak{h}$ by
\begin{equation*}
\psi_i(\mathbf{t})= u_\mathbf{t}^{-1} q_i u_\mathbf{t} \quad\text{ for all $\mathbf{t}\in \br^{d-1}$}.
\end{equation*}
Since $H_2\cap\op{N}(U)=U_2=\op{C}_{H_2}(U_2)$, it follows that  $r_i\in H_2-\op{C}_{H_2}(U_2)$.
Hence $\psi_i $ is a nonconstant polynomial.
Let  $\la_i> 0$ be  the supremum of $\la>0$ such that
$\sup_{{\bt}\in {B (\la)}} {\|\psi_i({\bt}) \|} \le 1$ where $B(\la)$ denotes the
ball in $\br^{d-1}$ of radius $\la$ centered at $0$. Then $0<\la_i< \infty$ and $\la_i \to \infty$.

Now the rescaled polynomials 
 $\phi_i:=\psi_i\circ \la_i: \br^{d-1} \to \mathfrak{h}$
are uniformly bounded on the unit ball with uniformly bounded degree and
 $\lim_{i\to \infty} \phi_i(0)= 0$. Therefore, by passing to a subsequence,
$\phi_i$  converges
 to a  polynomial $\phi : \br^{d-1} \to \mathfrak{h}$ uniformly on compact subsets.
 Since $\sup_{{\bt}\in {B(1)}} {\|\phi({\bt}) \|} =1$, $\phi$ is not a constant.

We claim that $\op{Im}(\phi)\subset \op{Lie}(U_2)$.
For any fixed ${\bs}\in \br^{d-1}$, we have $\la_i^{-1} {\bs}\to 0$, and hence
for any ${\bt}\in \br^{d-1}$, 
\begin{align*} u_{\bs}^{-1} \phi({\bt}) u_{\bs} & =\lim_{i\to \infty} u_{-\la_i \bt -\bs} q_i  u_{\la_i {\bt} +\bs }\\
&=
\lim_{i\to \infty} u_{-\la_i ( \bt + \la_i^{-1} \bs)} q_i  u_{\la_i ({\bt} +\la_i^{-1} \bs )}
\\&=
\lim_{i\to \infty} u_{-\la_i \bt}  q_i  u_{\la_i {\bt}}
=\phi(\bt).
\end{align*}

Hence $ \phi({\bt})$ belongs to the centralizer of $U_2$.
Since the centralizer of $U_2$ in $\mathfrak h$ is equal to $\op{Lie}U_2$,  the claim follows.

Set
\begin{equation*}
\mathsf T_\infty:=\limsup\limits_{i\to\infty}\la_i^{-1} \mathsf T(x).
\end{equation*}
Fix $\mathbf{t}\in \mathsf T_\infty$  such that $\phi({\bt}) \ne 0$; this exists by Lemma \ref{ZD}.  Let ${{\bt}_i}\in \mathsf T(x)$  be a sequence
such that $\la_i^{-1} {{\bt}_i} \to \mathbf t$ as $i\to\infty$.
As $\phi_i\to \phi$ uniformly on compact subsets, it follows that
$$
\phi(\mathbf t)=\lim_{i\to \infty} (\psi_i \circ \la_i )(\la_i^{-1} {\bt}_i)= \lim_{i\to \infty} \psi_i ({\bt}_i) =
\lim_{i\to\infty} {u_{{\bt}_i}^{-1}}q_i {u_{{\bt}_i}}.
$$
Hence, by exponentiating, we obtain that  ${u_{{\bt}_i}^{-1}}r_i {u_{{\bt}_i}}$ converges to a non-trivial element of $U_2$.
\end{proof}

\section{Relative minimal subsets and additional invariance}

Let $X$ be a closed $H$-invariant subset of $Z$. 
A closed $U$-invariant subset $Y$ of $X$ is called $U$-minimal with respect to $\Om$ if $Y\cap \Om\ne \emptyset$ and for any $y\in Y\cap \Om$,
$yU$ is dense in $Y$.
Since every $H$-orbit in $Z$ intersects $\Omega$, $X\cap \Om \ne \emptyset$. By Zorn's lemma,
there exists a $U$-minimal subset $Y$ of $X$ with respect $\Om$, which we fix in the following.

\begin{lem}\label{min} If $\pi_i:Z\to Z_i$ denotes the canonical projection for $i=1,2$, we have
$$\pi_1(Y)=\RFPM\;\;\text{and }\;\; \pi_2(Y) =Z_2.$$
\end{lem}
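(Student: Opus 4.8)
The plan is to analyze the two projections separately, using the $U$-minimality of $Y$ with respect to $\Omega$ together with the structure of the $U_i$-actions on the factors $Z_i$.

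\medskip

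\textbf{The projection onto $Z_2$.} First I would observe that $\pi_2(Y)$ is a closed $\mathcal U$-invariant subset of $Z_2 = \Gamma_2\backslash H_2$ — closedness because $Y$ is compact (it lies in the compact set $\overline{Y\cap \Omega \cdot U}$... actually more carefully, $Y$ need not be compact, but $Y \cap \Omega$ is relatively compact, and $Y = \overline{(Y\cap\Omega)U}$, so $\pi_2(Y) = \overline{\pi_2(Y\cap\Omega)\,\mathcal U}$ is a closed $\mathcal U$-invariant set). Since $\Gamma_2$ is a cocompact lattice in $H_2 = G$ and $\mathcal U$ is a (contracting) horospherical subgroup, $Z_2$ has finite volume and the $\mathcal U$-action is uniquely ergodic... no — the correct classical input is that every orbit closure of the horospherical $\mathcal U$ on the compact homogeneous space $\Gamma_2\backslash G$ is all of $Z_2$ when $\Gamma_2$ is a cocompact lattice; equivalently, $\mathcal U$ acts minimally on $Z_2$. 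Hence any nonempty closed $\mathcal U$-invariant subset of $Z_2$ equals $Z_2$, giving $\pi_2(Y) = Z_2$. (If one prefers not to quote minimality of the horospherical action directly, one can instead use Hedlund's theorem / Ratner for $\mathcal U$ on $\Gamma_2\backslash G$: the only $\mathcal U$-invariant closed subsets are $Z_2$ and finitely many compact $\mathcal U_\ast$-homogeneous pieces coming from intermediate groups, but since $\mathcal U$ is maximal horospherical and $\Gamma_2$ is a lattice these degenerate, forcing $\pi_2(Y)=Z_2$; I would cite Ratner or Dani–Smillie here.)

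\medskip

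\textbf{The projection onto $Z_1$.} For the first factor, I would show the two inclusions $\pi_1(Y)\subset \RFPM$ and $\pi_1(Y)\supset \RFPM$. The inclusion $\subset$: since $Y$ is $U$-invariant and $Y\cap\Omega\ne\emptyset$, for $y=(y_1,y_2)\in Y\cap\Omega$ we have $y_1\in \RFM$; then $y_1\mathcal U\subset \RFM\cdot U_1 = \RFPM$, which is closed, so $\pi_1(Y) = \overline{\pi_1(Y\cap\Omega)\mathcal U}\subset \RFPM$. The inclusion $\supset$ is the substantive one. I would argue as follows: $\RFPM = \RFM\cdot U_1$, and $\RFM$ is a single $A_1M_1$-orbit closure structure — more precisely it suffices to show $\pi_1(Y)\supset \RFM$, since $\pi_1(Y)$ is already $U_1$-invariant and then $\pi_1(Y)\supset \RFM\cdot U_1 = \RFPM$. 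To get $\RFM\subset\pi_1(Y)$: the set $\pi_1(Y)\cap\RFM$ is a nonempty closed subset of $\RFM$; I claim it is $\mathcal U$-invariant inside $\RFM$ in the appropriate sense and, crucially, I would use that the $\mathcal U$-action together with recurrence to $\RFM$ is "minimal" on $\RFM$ in the sense that the only nonempty closed subset of $\RFM$ invariant under the first-return $\mathcal U$-dynamics is $\RFM$ itself. This is exactly the minimality of the Bowen–Margulis–Sullivan picture: when $\Gamma_1$ is convex cocompact and Zariski dense, the horospherical $\mathcal U$-action on $\RFM$ is minimal (this is a theorem of Ferté / can be extracted from the fact that $\Lambda_1$ is $\Gamma_1$-minimal and the geodesic/horospherical correspondence); so any nonempty closed $\mathcal U_1$-invariant subset of $\RFPM$ that meets $\RFM$ must contain all of $\RFM$, hence all of $\RFPM$.

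\medskip

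\textbf{The main obstacle and how I would handle it.} The delicate point is the inclusion $\pi_1(Y)\supset\RFPM$: a priori $Y$ is only $U$-minimal \emph{relative to} $\Omega$, so I only control $yU$ for $y\in Y\cap\Omega$, and $\pi_1(Y)$ need not obviously be all of $\RFM$ just from $U$-minimality — the coupling with the $Z_2$ factor could in principle constrain which points of $\RFM$ appear. The resolution I would pursue: fix $y=(y_1,y_2)\in Y\cap\Omega$ with $\overline{yU}=Y$. By the minimality of $\mathcal U$ on $\RFM$ (convex cocompact, Zariski dense case), $\overline{y_1\mathcal U} = \RFPM$ inside $Z_1$, so in particular $\RFM\subset\overline{y_1\mathcal U}$; thus every point of $\RFM$ is a limit $y_1 u_{\mathbf t_j}$ with $u_{\mathbf t_j}\in\mathcal U$ chosen so that $y_1 u_{\mathbf t_j}\in\RFM$, i.e. $\mathbf t_j\in\mathsf T(y)$. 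Along such a sequence $(y_1u_{\mathbf t_j}, y_2 u_{\mathbf t_j})\in Y$, and since $y_1u_{\mathbf t_j}\to$ (the given point of $\RFM$) while the $Z_2$-coordinate stays in the compact set $Z_2$, passing to a subsequence gives a point of $Y$ projecting to that arbitrary point of $\RFM$. Hence $\RFM\subset\pi_1(Y)$, and $U_1$-invariance of $\pi_1(Y)$ upgrades this to $\RFPM\subset\pi_1(Y)$, completing the proof. I expect that making "minimality of $\mathcal U$ on $\RFM$" precise and correctly cited (Ferté, or Flaminio–Spatzier, or deducing it from $\Gamma_1$-minimality of $\Lambda_1$ via the visual map as in Lemma \ref{Zd}) is where the real care is needed.
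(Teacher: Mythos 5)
Your proposal follows the same route as the paper: the paper's entire proof is the single observation that $U_1$ acts minimally on $\RFPM$ and $U_2$ acts minimally on $Z_2$ (citing Winter), and these are exactly the two inputs you invoke. Your treatment of $\pi_1$ is correct and in fact supplies the detail the paper omits: since $Z_2$ is compact, $\pi_1|_Y$ is a closed map, so $\pi_1(Y)$ is a nonempty closed $\cal U$-invariant subset of $\RFPM$ and minimality finishes it. Two caveats, though. First, "minimality of $\cal U$ on $\RFM$" is not a meaningful statement --- $\RFM$ is not $\cal U$-invariant; the correct input (which you also state, and which is what Winter/Fert\'e prove) is that every $\cal U$-orbit in $\RFPM$ is dense in $\RFPM$. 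Your detour through $\RFM$ is unnecessary: from $\overline{y_1\cal U}=\RFPM$ and compactness of $Z_2$ you get $\RFPM\subset\pi_1(Y)$ directly.

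Second, and more substantively: your argument for $\pi_2(Y)=Z_2$ rests on the identity $\pi_2(Y)=\overline{\pi_2(Y\cap\Om)\,\cal U}$, i.e.\ on $\pi_2(Y)$ being closed. This does not follow the way it did for $\pi_1$, because $\pi_2$ projects along the \emph{noncompact} factor $Z_1$: a sequence $(w_i^1,w_i^2)\in Y$ with $w_i^2\to z_2$ may have $w_i^1$ escaping to infinity in $\RFPM$, and nothing in the relative minimality of $Y$ obviously rules this out. What follows immediately from minimality of $\cal U$ on $Z_2$ is only that $\pi_2(Y)$ is dense. To be fair, the paper's one-line proof does not address this point either (and in the application in Proposition 7.2 the genericity of the second coordinate is anyway re-arranged afterwards by translating by $u_{\bt}$), but as written your justification of closedness of $\pi_2(Y)$ is the one step that is asserted rather than proved.
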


\begin{proof}
The claim follows since $U_1$ and $U_2$ act minimally on $\RFPM$ and $Z_2$ respectively \cite{Win}.
\end{proof}

\begin{lemma}\label{lem.normalizerorbit} Let $S$ be a closed  subgroup of $\op{N}(U)$ containing $U$.
For any $y\in Y\cap \Om$, the orbit $yS$ is not locally closed.
\end{lemma}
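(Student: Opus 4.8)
The plan is to argue by contradiction: suppose $yS$ is locally closed for some $y \in Y \cap \Omega$. Then $yS$ is a locally compact homogeneous space for $S$, so by Baire category (or the standard argument for orbits of subgroups) the orbit map $S \to yS$ is open, i.e.\ $yS$ carries the quotient topology $\op{Stab}_S(y) \ba S$ and is an embedded submanifold. Since $S \subset \op{N}(U) = AMU_1U_2$, and since $S$ contains $U$, the orbit $yS$ is a manifold on which $U$ acts; the key point I want to extract is that local closedness forces the $U$-action on $yS$ to be ``rigid'' near $y$ in a way that contradicts the recurrence supplied by Lemma~\ref{ZD}.

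The main step is to produce, from the failure of the conclusion, a sequence $g_i \to e$ in $\cal H - \op{N}(U)$ (or in $H_2 - \op{N}(U)$) with $yg_i \in Y$, and then invoke the unipotent blowup lemmas (Lemma~\ref{lem.UgU} or Lemma~\ref{lem.UgH}) to generate extra invariance of $Y$ transverse to $S$. Concretely: because $y \in Y \cap \Omega$ and $Y$ is $U$-minimal with respect to $\Omega$, the orbit $yU$ is dense in $Y$; if $yS$ were locally closed, then $yS$ would be open in its closure $\overline{yS} \subset Y$, and one checks $\overline{yS}$ is $S$-invariant and closed, hence (after intersecting with $\Omega$ and running the minimality argument) $\overline{yS}$ must already equal $Y$ — so $yS$ is open in $Y$. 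On the other hand, $\dim yS < \dim Y$ would contradict openness, so we would get $\dim yS = \dim Y$; but then $Y$ would be a single $S$-orbit locally, which via $\pi_1(Y) = \RFPM$ and the structure of $\RFM$ being a Cantor-like (non-manifold, positive-codimension-resisting) set — cf.\ Lemma~\ref{Zd} and the fact that no open piece of $\Lambda_1$ lies in a proper submanifold — is impossible because $\Omega = \RFM \times Z_2$ meeting $Y$ cannot be a piece of a manifold of the dimension of $S$. I would make this precise by intersecting the putative manifold $yS \cap \Omega$ with the $\RFM$-direction.

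Alternatively, and perhaps more cleanly, I would take the contrapositive route used repeatedly in this circle of ideas: if $yS$ is locally closed it is, in particular, closed in a neighborhood, so there is a neighborhood $\cal O$ of $e$ in $\cal H$ with $y\cal O \cap yS$ relatively open in $yS$ and $yS$ is an injectively immersed submanifold of dimension $= \dim S$. Pick $g_i \to e$ in $\cal H - \op{N}(U)$; by Lemma~\ref{lem.UgU} there are $u_i' \in U$, $u_i \in \mathsf T(y)$ with $u_i' g_i u_i \to g_\infty \in (AMU_2 - M) \cap \cal O$. Since $y u_i \in \Omega \subset Y$ (as $u_i \in \mathsf T(y)$ and $Y \supset yU \cap \Omega$ — here I use that $Y$ is $U$-invariant and $yu_i \in \overline{yU} = Y$), and $Y$ is $U$-invariant, $u_i' g_i u_i \cdot (\text{appropriate translate of } y)$ stays in $Y$; passing to the limit gives $y' g_\infty \in Y$ for a limit point $y'$ of $y u_i$ in $Y \cap \Omega$. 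This shows $Y$ has an extra invariance direction $g_\infty \in AMU_2 - M$ beyond $S \cap (AMU_2)$... — the upshot being that $yS$ cannot be all of $Y$ near $y$ yet $Y$ has no room to be bigger than $yS$ if $yS$ is locally closed, a contradiction. The honest statement is: local closedness of $yS$ plus $U$-minimality of $Y$ forces $yS$ to be open in $Y$, while the blowup lemmas produce invariance of $Y$ under elements not in $S$, contradicting openness once one checks dimensions.

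The hard part will be organizing the dichotomy correctly — namely verifying that $\overline{yS} \cap \Omega \ne \emptyset$ and that $\overline{yS}$ inherits enough structure to run $U$-minimality, so that local closedness really does upgrade to ``$yS$ open in $Y$,'' and then feeding the right sequence $g_i \to e$ in $\cal H - \op{N}(U)$ into Lemma~\ref{lem.UgU}/\ref{lem.UgH} to manufacture the forbidden transverse invariance. I expect the dimension bookkeeping between $S \subset AMU_1U_2$, the output $AMU_2 - M$ of the blowup, and the directions already present in $S$ to be the delicate point; one must ensure the new element genuinely escapes $S$ and is not absorbed by enlarging within $\op{N}(U)$.
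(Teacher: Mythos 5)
Your proposal does not contain the paper's actual argument, and as written it has a circularity problem. The sequence you want to feed into Lemma~\ref{lem.UgU} --- namely $g_i\to e$ in $\cal H-\op{N}(U)$ with $yg_i\in Y$ (you need $yg_i\in Y$, not just $g_i\to e$, or else the limit $y'g_\infty$ has no reason to lie in $Y$) --- is precisely the conclusion of Lemma~\ref{lem.G-N}, and the paper proves Lemma~\ref{lem.G-N} \emph{by contradiction with the present lemma}. If $y\cal O\cap Y\subset y\op{N}(U)$ for some neighborhood $\cal O$ of $e$, no such sequence exists, so you cannot ``pick'' one; this is exactly the case the present lemma is designed to rule out. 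Your other route, upgrading local closedness to ``$yS$ is open in $Y$'' and then comparing dimensions, also does not go through: it is not even true that $yS\subset Y$ (the set $Y$ is only $U$-invariant, not $S$-invariant), and $Y\cap\Om$ projects onto the fractal set $\RFM$, so $Y$ is not a manifold and no dimension count of the kind you sketch is available.

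The ideas the paper actually uses, and which are absent from your proposal, are the following. First, from the unboundedness of $\mT(y)$ and the relative $U$-minimality of $Y$ one gets a sequence $u_i\to\infty$ in $U$ with $yu_i\to y$ (the set $Q$ of such limit points is closed, $U$-invariant, and meets $\Om$, hence equals $Y$ and contains $y$). Second, if $yS$ were locally closed it would be homeomorphic to $(S\cap\Ga)\ba S$, so the recurrence $yu_i\to y$ forces the existence of $\delta_i\in S\cap\Ga$ with $\delta_iu_iarrow\to e$; writing $\delta_i=a_ir_i$ with $a_i\in A$ and $r_i\in MU_1U_2$ gives $a_i\to e$. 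Third --- and this is the essential input you never invoke --- the contradiction comes from the \emph{arithmetic} of $\Ga_1$: if $a_i=e$ eventually, the first components $\delta_i^1$ would be parabolic, impossible for a convex cocompact group; if $a_i\ne e$ infinitely often, one contradicts the uniform positive lower bound on translation lengths of elements of the discrete group $\Ga_1$. Any correct proof must use discreteness and convex cocompactness of $\Ga_1$ somewhere; your proposal uses neither, which is the clearest sign of the gap.
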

\begin{proof} 
Suppose that $yS$ is locally closed for some $y\in Y\cap \Om$.
We claim that there exists  a sequence $u_i\to\infty$ in $U$ such that $yu_i\to y$ as $i\to \infty$.
Let 
\begin{equation*}
Q:=\{z\in Y : z=\lim\limits_{i\to\infty}yu_i\text{ for some }u_i\to\infty\text{ in }U\}.
\end{equation*}
Since $\mathsf{T}(y)$ is unbounded, there exists $u_i\to \infty$ in $U$ such that
$yu_i\in Y\cap \Om$. Since any limit of the sequence $yu_i$ belongs to $Q\cap \Omega$, 
we have $Q\cap{}\Om\neq\emptyset$.
Since $Q$ is a closed $U$-invariant set, $Q=Y$ by the relative $U$-minimality of $Y$.
In particular, $y\in Q$, proving the claim.
We may assume that $y=[e]$ without loss of generality.
Let $\Ga:=\Ga_1\times\Ga_2$.
Since $yS$ is locally closed, $yS$ is homeomorphic to the quotient $(S\cap\Ga)\ba S$.
Therefore there exists $\delta_i\in S\cap\Ga$ such that $\delta_iu_i\to e$ as $i\to\infty$.

Since $\op{N}(U)=AMU_1U_2$, writing $\delta_i=a_ir_i$ for $a_i\in A$ and $r_i\in MU_1U_2$, it follows that  $a_i\to e$ as $i\to\infty$.
Write $\delta_i=(\delta_i^1, \delta_i^2)\in \Ga_1\times \Ga_2$.
In the case when $a_i=e$ for all sufficiently large $i$,  it follows from $u_i\to \infty$ in $U$ that
$\delta_i^1$ must be a parabolic element of $\Gamma_1$, yielding a contradiction to the convex cocompactness of $\Ga_1$.
In the case when $a_i\ne e$ for an infinite subsequence,  we again get a contradiction, because  there is  a uniform  positive lower bound for all translation lengths of elements of $\Ga_1$. This finishes the proof.
\end{proof}
\begin{lemma}\label{lem.G-N} 
For any $y\in Y\cap \Om $, there exists a sequence  $g_i\to e$ in $\cal H-\op{N}(U)$ such that $yg_i\in Y$.
\end{lemma}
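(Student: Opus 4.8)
The plan is to argue by contradiction: suppose that for some $y\in Y\cap\Om$ there is a neighborhood $\cal O$ of $e$ in $\cal H$ such that $yg\notin Y$ for every $g\in (\cal O\cap\cal H)-\op{N}(U)$; equivalently, the set of $g\in\cal O$ with $yg\in Y$ is contained in $\op{N}(U)=AMU_1U_2$ locally. First I would extract a recurrence sequence inside $U$: since $\mathsf T(y)$ is unbounded and $Y$ is $U$-minimal with respect to $\Om$, exactly as in the proof of Lemma \ref{lem.normalizerorbit} one finds $u_i\to\infty$ in $U$ with $yu_i\to y$. This produces nearby returns, and the strategy is to play the ``extra push'' off against these returns. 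Writing $yu_i = y h_i$ with $h_i\to e$ in $\cal H$, if infinitely many $h_i$ lie outside $\op{N}(U)$ we are immediately done; so assume $h_i\in\op{N}(U)=AMU_1U_2$ for all large $i$, and in fact (since $yu_i\in Y\cap\Om$ can be arranged, and $\Om$ is $A M U_1 U_2$-related to the $\op{RF}$ structure) we may analyze the $A_1M_1U_1$-component versus the $A_2M_2U_2$-component separately.

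The core mechanism is a transversality/non-closedness input supplied by Lemma \ref{lem.normalizerorbit}: the orbit $yS$ is not locally closed for any closed $S$ with $U\subset S\subset\op{N}(U)$. Taking $S=\op{N}(U)$ itself, $y\op{N}(U)$ fails to be locally closed, so there exist $p_i\to\infty$ in $\op{N}(U)$ (modulo the stabilizer) with $yp_i\to y$, and these $p_i$ cannot all be absorbed into a fixed compact transversal; concretely one gets $\delta_i\in\Ga$ and elements of $\op{N}(U)$ witnessing that $y$ accumulates to itself along directions that leave $U$. I would then take a limit of conjugates: if $yp_i\to y$ with $p_i\notin U$ but $p_i\in\op{N}(U)$, write $p_i = a_i m_i v_i$ with $v_i\in U_1U_2$; the $A$-part must go to $e$ by the translation-length argument of Lemma \ref{lem.normalizerorbit} (convex cocompactness of $\Ga_1$), and after conjugating by a suitable $u_{\bt_i}$ with $\bt_i\in\mathsf T(y)$ — using Lemma \ref{lem.UgH} (or the construction in Lemma \ref{lem.UgU}) to normalize the $H_2$-part into $U_2$ — one produces a genuinely new small element $g_i\to e$ with $g_i\notin\op{N}(U)$ and $yg_i\in Y$, because $Y$ is $U$-invariant and closed and the conjugation stays inside $U\subset H$, hence preserves $Y$.

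More precisely, the intended route: from $yu_i\to y$ and $y$ a fixed point, pass to $\Ga$-representatives to get $\delta_i u_i\to e$ with $\delta_i=(\delta_i^1,\delta_i^2)\in\Ga_1\times\Ga_2$; since $\Ga_1$ is convex cocompact and $\delta_i^1$ must act with bounded translation length, $\delta_i^1$ is eventually trivial or elliptic-bounded, forcing the relevant discrepancy to live in $\Ga_2$. Now $\Ga_2$ is a cocompact lattice, so its intersection patterns with $\op{N}(U)\cap H_2=U_2$ are controlled, and one can find $r_i\to e$ in $H_2-\op{N}(U)$ with $yr_i\in Y$; applying Lemma \ref{lem.UgH} to $x=y$ conjugates $r_i$ into a nontrivial limit in $U_2$, but the intermediate elements $u_{-\bt_i}r_iu_{\bt_i}$ — each obtained from $yr_i\in Y$ by the $U$-action, hence in $Y$ — are eventually not in $\op{N}(U)$ and tend to $e$, giving the sequence $g_i$ we want. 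I expect the main obstacle to be the bookkeeping in the case split: ruling out that the approximating elements $h_i$ are all trapped in $\op{N}(U)$ in a ``trivial'' way (i.e. that the dynamics genuinely produces a transverse direction rather than staying inside the normalizer), which is exactly where non-local-closedness of $yS$ for $S=\op{N}(U)$ and the discreteness/convex-cocompactness of $\Ga_1$ together with the lattice property of $\Ga_2$ must be combined carefully; once a transverse small element is produced, closedness and $U$-invariance of $Y$ finish it immediately.
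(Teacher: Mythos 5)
Your proposal starts from the right contradiction hypothesis and correctly senses that Lemma \ref{lem.normalizerorbit} is the engine, but the way you try to run that engine does not work, and the middle of your argument is circular.

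First, the circularity: in your final paragraph you say one can ``find $r_i\to e$ in $H_2-\op{N}(U)$ with $yr_i\in Y$'' from the lattice property of $\Ga_2$. That is precisely (a strengthening of) the statement being proved; no argument is given for it, and none of the structure you invoke ($\Ga_2$ cocompact, $\op{N}(U)\cap H_2=U_2$) produces it. Moreover, Lemma \ref{lem.UgH} is being used backwards: it \emph{consumes} a sequence $r_i\to e$ in $H_2-\op{N}(U)$ and outputs a nontrivial limit in $U_2$; it cannot manufacture such a sequence, and the conjugates $u_{-\bt_i}r_iu_{\bt_i}$ do not tend to $e$ (they tend to a nontrivial element of $U_2$), so they are not candidates for the $g_i$ you want. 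Second, your use of Lemma \ref{lem.normalizerorbit} in the ``forward'' direction with $S=\op{N}(U)$ does not give what you claim: non-local-closedness of $y\op{N}(U)$ only says this orbit is not open in its own closure; it does not produce $p_i\to\infty$ in $\op{N}(U)$ with $yp_i\to y$, and in any case $y\op{N}(U)$ need not lie in $Y$, so this says nothing about points of $Y$ near $y$ transverse to $\op{N}(U)$.

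The missing idea is to use the negation of the lemma to \emph{verify} local closedness of a suitable orbit and then contradict Lemma \ref{lem.normalizerorbit}, rather than trying to extract transverse elements directly. Concretely: if $y\cal O\cap Y\subset y\op{N}(U)$, set $S:=\{g\in\op{N}(U):Yg=Y\}$, a closed subgroup of $\op{N}(U)$ containing $U$. For any $g\in\cal O$ with $yg\in Y$ one has $g\in\op{N}(U)$, hence $Yg=\overline{yU}g=\overline{ygU}\subset Y$; since $Yg\subset\Om_+=\Om U$, the set $Yg$ meets $\Om$, so relative minimality forces $Yg=Y$, i.e.\ $g\in S$. Thus $y\cal O\cap Y\subset yS$, so $yS$ is open in $Y=\overline{yS}$, i.e.\ locally closed — contradicting Lemma \ref{lem.normalizerorbit}. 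The minimality step ($Yg=Y$ for all small $g\in\op{N}(U)$ moving $y$ within $Y$) is the point your sketch never reaches, and without it the argument does not close.
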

\begin{proof}
Suppose not. Then there is an open neighborhood $\cal O\subset \cal H$ of $e$ such that
\begin{equation}\label{eq.G-N} 
y\cal{O}\cap Y\subset y\op{N}(U).
\end{equation}
We may assume the map $g \mapsto yg\in X$ is injective on $\cal O$ by shrinking $\cal O$ if necessary.
 Set $$S:=\{g\in \op{N}(U) : Yg= Y\}$$
 which is a closed subgroup of $\op{N}(U)$ containing $U$.
We will show that $yS$ is locally closed;
this contradicts Lemma \ref{lem.normalizerorbit}.
We first claim that \be\label{yss} y\cal{O}\cap Y\subset yS.\ee
If $g\in \cal O$ such that $yg\in Y$, then $g\in \op{N}(U)$ by \eqref{eq.G-N}. Therefore
$Yg=\overline{yU} g=\overline{yg U}\subset Y$.
Moreover, since $Yg\subset Y \subset \Om_+$ and $\Om_+=\Om U$, we have $Yg\cap\Om\neq\emptyset$.
By the minimality assumption on $Y$, $Yg=Y$, proving that $g\in S$, and hence \eqref{yss}.

Therefore  $yS$ is an open  $U$-invariant subset of $Y$.
Since $Y=\cl{yS}$,  it follows that $yS$ is locally closed.
\end{proof}

By a one-parameter semigroup of $\cal H$, we mean a subset of the form $\{\exp (t\xi): t\ge 0\}$
for some non-zero $\xi$ in the Lie algebra of $\cal H$.
\begin{proposition}[Translate of $Y$ inside of $Y$]\label{prop.YSY} 
There exists a one-parameter subsemigroup $S<AMU_2$ such that $S\not\subset M$ and
$$YS\subset Y.$$
\end{proposition}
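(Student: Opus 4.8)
The plan is to combine the ``unipotent blowup'' lemmas of the previous section with the structural Lemma \ref{lem.G-N}, and then run a return-time argument inside $Y$. First I would fix $y\in Y\cap\Om$ and invoke Lemma \ref{lem.G-N} to obtain a sequence $g_i\to e$ in $\cal H-\op{N}(U)$ with $yg_i\in Y$. I would like to feed these $g_i$ directly into Lemma \ref{lem.UgU}, but that lemma produces $u_i'g_iu_i$ converging into $(AMU_2-M)\cap\cal O$ only after multiplying on the \emph{left} by elements $u_i'\in U$, and multiplying $yg_i$ on the left by $U$ need not keep us in $Y$. The fix is the standard one: replace $y$ by a suitable $U$-translate. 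Concretely, write $u_i'g_iu_i\to h\in(AMU_2-M)\cap\cal O$, choose $\bt_i\in\mathsf T(x)$-type return times so that $yu_i\in\Om$ along a subsequence (using that $Y\cap\Om$ is where $U$-orbits are dense and that $\mathsf T$ is unbounded), pass to a limit $y'=\lim y u_i\in Y\cap\Om$, and then observe $y'(u_i')^{-1}\cdot (u_i'g_iu_i)=(yu_i)g_i^{-1}\cdots$ — more carefully, one arranges $(yg_i)u_i = y(g_iu_i)$ and tracks that $y(g_iu_i)(u_i')^{-1}\to y'h^{\pm 1}$ stays in $Y$ because $Y$ is closed and $U$-invariant and each $yg_iu_i\in Y$. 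So after passing to limits we obtain a point $y'\in Y\cap\Om$ and a nontrivial $h\in (AMU_2-M)\cap\cal O$ with $y'h\in Y$.

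Next I would upgrade ``a nontrivial element of $AMU_2$'' to ``a one-parameter subsemigroup.'' The point is that we can run the blowup argument with $g_i\to e$ \emph{along} a curve, or alternatively take $h=h_i\to e$ a sequence of such elements (Lemma \ref{lem.G-N} gives $g_i\to e$, and the blowup output $h$ can be taken in $\cal O\cap(AMU_2-M)$ for $\cal O$ arbitrarily small, so iterating gives $h_j\to e$ in $(AMU_2-M)$ with $y_j h_j\in Y$ for points $y_j\in Y\cap\Om$). Writing $h_j=\exp(\xi_j)$ with $\xi_j\to 0$ in $\op{Lie}(AMU_2)$, normalize $\xi_j/\|\xi_j\|\to\xi\ne 0$ in $\op{Lie}(AMU_2)$; by compactness of $\Om$ the base points $y_j$ subconverge to some $y_\infty\in Y\cap\Om$. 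For each fixed $t\ge 0$, approximate $t$ by $n_j\|\xi_j\|$ with $n_j\in\mathbb Z_{\ge 0}$; then $y_j h_j^{n_j}=y_j\exp(n_j\xi_j)\to y_\infty\exp(t\xi)$, and since $y_j h_j\in Y$ and $Y$ is $U$-invariant one must be slightly careful that the \emph{powers} $y_j h_j^{n_j}$ still lie in $Y$ — this holds because $h_j\in\op{N}(U)$ and the minimality of $Y$ forces $Yh_j=Y$ whenever $Yh_j\cap\Om\ne\emptyset$, by the same argument as in Lemma \ref{lem.G-N}: $Yh_j=\overline{y_jU}h_j=\overline{y_jh_jU}\subset Y$ and $Yh_j\subset\Om_+$ meets $\Om$. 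Hence $Yh_j=Y$, so $y_jh_j^{n_j}\in Y$ for all $n_j$, and passing to the limit gives $y_\infty\exp(t\xi)\in Y$ for all $t\ge 0$. With $S:=\{\exp(t\xi):t\ge 0\}$ we get $y_\infty S\subset Y$; applying the relative $U$-minimality once more (since $y_\infty\in Y\cap\Om$, $\overline{y_\infty U}=Y$, and $\overline{y_\infty U}S\subset Y$ using that $S$ normalizes $U$) upgrades this to $YS\subset Y$. Finally $\xi\notin\op{Lie}(M)$ because each $h_j\notin M$ and, as $\|\cdot\|$-normalization is taken inside a complement-aware decomposition of $\op{Lie}(AMU_2)$, the limiting direction cannot fall into the $M$-part — more robustly, one selects the $h_j$ with a definite lower bound on the $(AU_2)$-component (possible since Lemma \ref{lem.UgU} hands back $\psi(\bt)\notin M$, and in fact with $\|\phi(\bt)\|^2\ne\|p\|^2$ controlling the $AU_2$-direction away from $0$), so the limiting $\xi$ has nonzero $\op{Lie}(AU_2)$-component, giving $S\not\subset M$.

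The main obstacle I anticipate is the bookkeeping in the first paragraph: converting the left-multiplication output of Lemma \ref{lem.UgU} (which is tailored to the ambient $\cal H$-geometry, not to staying inside $Y$) into an honest translation $y'h\in Y$ with $y'\in Y\cap\Om$. One must juggle three facts simultaneously — that $yg_i\in Y$, that $U$-translates of points of $Y\cap\Om$ return to $\Om$ infinitely often with the relevant return times $\bt_i$ being exactly the ones appearing in $\mathsf T(x)$ via Lemma \ref{lem.UgU}, and that $Y$ is closed — to extract a convergent subsequence whose limit lies in $Y\cap\Om$. A secondary subtlety is making sure the limiting one-parameter semigroup is genuinely nontrivial and escapes $M$: this is where the clause ``$\psi(\bt)\notin M$'' in Lemma \ref{lem.UgU} is essential and must be propagated through the normalization limit, rather than merely noting $h_j\ne e$.
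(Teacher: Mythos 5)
Your overall strategy coincides with the paper's: get $g_i\to e$ in $\cal H-\op{N}(U)$ with $yg_i\in Y$ from Lemma \ref{lem.G-N}, run the unipotent blowup of Lemma \ref{lem.UgU}, extract a point of $Y\cap\Om$ translated by a small element of $AMU_2-M$, and then pass to a one-parameter subsemigroup. However, the step you yourself flag as "the main obstacle" is exactly where your write-up breaks down. The identity $y(g_iu_i)(u_i')^{-1}\to y'h^{\pm 1}$ is not correct: from $u_i'g_iu_i\to h$ one only gets $yg_iu_i(u_i')^{-1}=y(u_i')^{-1}\,(u_i'g_iu_i)\,(u_i')^{-1}$, and neither the factor $y(u_i')^{-1}$ nor the conjugation by $u_i'$ is controlled, since $u_i'$ is the \emph{left} multiplier, which Lemma \ref{lem.UgU} does not place in $\mathsf T(y)$ and which may escape to infinity. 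The paper's one-line fix, which you are missing, is to apply Lemma \ref{lem.UgU} to the sequence $g_i^{-1}$ (still $\to e$ and outside the group $\op{N}(U)$), obtaining $u_i'g_i^{-1}u_i\to\alpha_k\in(AMU_2-M)\cap\cal O_k$ with $u_i\in\mathsf T(y)$; then
\begin{equation*}
yg_i(u_i')^{-1}=(yu_i)\,\bigl(u_i'g_i^{-1}u_i\bigr)^{-1}\longrightarrow y_k\alpha_k^{-1},
\end{equation*}
where now the recurrent multiplier $u_i$ sits next to $y$, so $yu_i\to y_k\in Y\cap\Om$ along a subsequence, while $yg_i(u_i')^{-1}\in Y$ by right $U$-invariance of $Y$. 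Closedness of $Y$ gives $y_k\alpha_k^{-1}\in Y$ and relative minimality gives $Y\alpha_k^{-1}\subset Y$. Without this inversion your limit point is not shown to lie in $Y$.

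On the upgrade from a sequence $\beta_k\to e$ in $AMU_2-M$ with $Y\beta_k\subset Y$ to a one-parameter subsemigroup $S\not\subset M$: the paper does not reprove this; it cites \cite[Lemma 10.5]{LO}. Your direct argument (normalize $\xi_j/\|\xi_j\|\to\xi$, approximate $\exp(t\xi)$ by powers $h_j^{n_j}$, use that $Yh_j\subset Y$ iterates) is the right shape, but it contains the gap you half-acknowledge: $\xi_j\notin\op{Lie}(M)$ for each $j$ does not prevent the normalized limit $\xi$ from lying in $\op{Lie}(M)$, since the $M$-component of $\xi_j$ may dominate. Your proposed repair --- selecting $h_j$ with "a definite lower bound on the $(AU_2)$-component" --- is not delivered by Lemma \ref{lem.UgU}: the condition $\|\phi(\bt)\|^2\ne\|p\|^2$ only guarantees $\psi(\bt)\notin M$, i.e.\ a nonzero $AU_2$-part, with no control on its size relative to the $M$-part. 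Closing this requires an actual argument (this is the content of \cite[Lemma 10.5]{LO}, exploiting compactness of $M$ and the closed subsemigroup $\{g\in AMU_2: Yg\subset Y\}$), not just the statement of Lemma \ref{lem.UgU}.
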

\begin{proof} It suffices to prove that there exists a sequence $\beta_k\to e$ in $AMU_2 -M$ such that $Y\beta_k\subset Y$
(cf. \cite[Lemma 10.5]{LO}).
Choose $y\in Y\cap\Om$.
By Lemma \ref{lem.G-N}, there exists $g_i\to e$ in $\cal H-\op{N}(U)$ such that $yg_i\in Y$. 
 Let $\cal O_k$ be a decreasing sequence of neighborhoods of $e$ in $G$ so that $\bigcap_k \cal O_k=\{e\}$.
Fix $k$. Applying Lemma \ref{lem.UgU} to the sequence $g_i^{-1}$, we get  $u_i'\in U$ and $u_i\in\mathsf{T}(y)=\{u\in U : y   u\in  \Om\}$ such that $u_i'  g_i^{-1} u_i$ converges to some element
$\alpha_k \in (AMU_2-M )\cap \cal O_k$.

Since $Y\cap \Om$ is compact, by passing to a subsequence,
 $yu_i$ converges to some $ y_k\in Y\cap\Om$ as $i\to\infty$. Hence as $i\to \infty$,
 $$yg_i(u_i')^{-1} = (yu_i) ( u_i' g_i^{-1} u_i)^{-1} \to y_k \alpha_k^{-1} \in Y.$$

Since $y_k\in Y\cap \Om$ and $\alpha_k\in \op{N}(U)$, it follows that $Y \alpha_k^{-1}\subset Y$.
It remains to set $\beta_k:=\alpha_k^{-1}$.
\end{proof}

\begin{proposition}[Translate of $Y$ inside of $X$] \label{prop.R3}\label{YvX}\label{prop.YvX}
 Suppose that
 there exists $y\in Y\cap \Om$ such that  $X-yH$ is not closed. Then there exists a non-trivial element $v\in U_2$ such that
\begin{equation*}
Yv\subset X.
\end{equation*}
\end{proposition}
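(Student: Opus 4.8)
The hypothesis that $X-yH$ is not closed for some $y\in Y\cap\Om$ means there is a sequence $y h_i \in X$ with $h_i\in H$ and $y h_i \to w$ for some $w\in X$ but $w\notin yH$. The plan is to push these convergence data down to $H_2$ via the relative $U$-minimality of $Y$, and then to apply the unipotent blow-up Lemma \ref{lem.UgH} to produce the nontrivial $U_2$-translate. First I would normalize: since $y\in Y\cap\Om$ and $U$ acts relatively minimally, by arguing as in Lemma \ref{lem.normalizerorbit} I may replace $y$ by any point in $\overline{yU}\cap\Om$, so I can assume $y=[e]$ (identifying $Z=\Ga\ba(G\times G)$ with $\Ga=\Ga_1\times\Ga_2$). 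Writing $h_i = (h_i,h_i)$ and $w=[e]\cdot(w_1,w_2)$ with $w\notin yH$, and using $H_1=G\times\{e\}$, the failure of $w$ to lie in $yH$ together with the presence of the $H_1$-direction (where $\Ga_1$ is discrete and convex cocompact) should let me peel off the $H_1\times H_2$-decomposition and reduce, after multiplying by suitable elements of $\Ga$, to a sequence $r_i\to e$ in $H_2$ with $r_i\notin\op{N}(U)=AMU_1U_2$; concretely $r_i\notin U_2$, since $H_2\cap\op{N}(U)=U_2$. The point is that if no such reduction were possible, $w$ would lie in $yH$ (or in the closure of $yH$ inside the larger group, contradicting local closedness as handled in Lemma \ref{lem.normalizerorbit}).

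Once I have $r_i\to e$ in $H_2-\op{N}(U)$ with $y r_i\in X$ (this last is where I must be careful: I need the translating elements, after the reduction, to still land in $X$, which follows because $X$ is $H$-invariant and I have only multiplied the original $yh_i\in X$ by elements of $\Ga$ and by elements of $H$), I invoke Lemma \ref{lem.UgH}: for $x=y\in\Om$ there is a sequence $\bt_i\in\mathsf T(y)$ such that $u_{-\bt_i} r_i u_{\bt_i}$ converges to a nontrivial $v\in U_2$. Then, exactly as in the proof of Proposition \ref{prop.YSY}, I use compactness of $Y\cap\Om$: since $\bt_i\in\mathsf T(y)$, the points $y u_{\bt_i}$ lie in $Y\cap\Om$, so after passing to a subsequence $y u_{\bt_i}\to y'\in Y\cap\Om$. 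Writing
\begin{equation*}
(y r_i)\, u_{\bt_i} = (y u_{\bt_i})\,\bigl(u_{-\bt_i} r_i u_{\bt_i}\bigr)\,u_{?}^{-1}
\end{equation*}
— more precisely $y r_i u_{\bt_i} = (y u_{\bt_i})(u_{\bt_i}^{-1} r_i u_{\bt_i})$ — I get $y r_i u_{\bt_i}\to y' v$. Since $y r_i\in X$ and $X$ is closed and $U$-invariant, $y' v\in X$. Finally $y'\in Y\cap\Om$ and $v\in U_2\subset\op{N}(U)$ normalizes $U$, so $Y v = \overline{y' U}\, v = \overline{y' v U}$; and since $Yv\subset\Om_+ = \Om U$ meets $\Om$, while $Yv\subset X$ would follow once I know $y'v\in X$ — here I should instead argue directly that $\overline{y'vU}\subset X$ because $y'v\in X$, $X$ is closed and $U$-invariant. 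Thus $Yv\subset X$ with $v$ nontrivial in $U_2$, as desired.

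The main obstacle I expect is the first reduction step: extracting from ``$X-yH$ is not closed'' a genuine sequence $r_i\to e$ in $H_2-\op{N}(U)$ with $yr_i\in X$. The subtlety is that the escaping sequence $h_i\in H$ a priori moves in the full diagonal $H$, not in $H_2$; one has to use that the limit point $w$ avoids $yH$ to conclude that, after correcting by a convergent element of $H$ and by lattice/discrete-group elements of $\Ga_1\times\Ga_2$, a nontrivial ``extra'' direction survives, and that this extra direction can be taken inside $H_2$. Here the asymmetry between the two factors is essential — $\Ga_2$ is a cocompact lattice (so $Z_2$ is compact and $H_2$-homogeneous with no escape of mass), while $\Ga_1$ is only convex cocompact — and one leverages $\pi_1(Y)=\RFPM$, $\pi_2(Y)=Z_2$ from Lemma \ref{min}. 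I would model this step on the analogous argument in \cite{LO} (and its antecedents \cite{MMO}, \cite{MMO2}) for the product space, taking care that the weaker recurrence (Lemma \ref{ZD}) rather than full thickness suffices because Lemma \ref{lem.UgH} has already been proved under exactly that hypothesis. The remaining steps are routine given Lemma \ref{lem.UgH} and the compactness of $Y\cap\Om$.
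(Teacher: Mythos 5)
Your overall strategy --- reduce to a sequence in $H_2$, apply Lemma \ref{lem.UgH}, then use compactness of $Y\cap\Om$ and relative minimality --- is the paper's. But the step you yourself flag as the main obstacle is set up on a misreading of the hypothesis, and the misreading is what makes it look hard. ``$X-yH$ is not closed'' does \emph{not} mean that a sequence $yh_i\in yH$ converges to some $w\in X-yH$; it means that a sequence in $X-yH$ accumulates on a point outside $X-yH$, and since $X$ is closed that limit point lies in $yH$. Translating by an element of $H$, you may take the limit point to be $y$ itself, giving $yg_i\in X$ with $g_i\to e$ in $\cal H-H$ --- i.e.\ nearby points of $X$ based at the good point $y\in Y\cap\Om$. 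Your version instead produces perturbation data based at the limit $w$, which need not lie in $Y$ or even in $\Om$, so neither Lemma \ref{lem.UgH} nor the concluding minimality argument would be available there.

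Once the hypothesis is read correctly, the reduction to $H_2$ is immediate and needs none of the machinery you invoke (no Lemma \ref{min}, no asymmetry between the factors, no corrections by elements of $\Ga$): writing $g_i=(g_i^1,g_i^2)$, one has $g_i=(e,\,g_i^2(g_i^1)^{-1})\cdot(g_i^1,g_i^1)\in H_2\cdot H$, so by $H$-invariance of $X$ the elements $r_i:=(e,g_i^2(g_i^1)^{-1})\to e$ satisfy $yr_i\in X$, and $r_i\ne e$ because $g_i\notin H$. Also, you cannot in general arrange $r_i\notin\op{N}(U)$ as you assert: if some $r_i$ lies in $\op{N}(U)\cap H_2=U_2$ you are already done by taking $v=r_i$, and only in the remaining case does Lemma \ref{lem.UgH} enter. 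With these two corrections, the rest of your argument (the identity $yr_iu_{\bt_i}=(yu_{\bt_i})(u_{\bt_i}^{-1}r_iu_{\bt_i})$, compactness of $Y\cap\Om$, and $Yv=\overline{y_0vU}\subset X$ since $v$ normalizes $U$) is correct and coincides with the paper's proof.
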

\begin{proof}
By the hypothesis, there exists a sequence  $g_i\to e$ in $\cal H-H$ such that $yg_i\in X$. 
Since $X$ is $H$-invariant, we may assume $g_i\in H_2$.
Note that $\op{N}(U)\cap H_2=U_2$.
Hence if $g_i \in \op{N}(U)$ for some $i$, then we can simply take $v:=g_i$.

Now suppose that $g_i \notin \op{N}(U)$ for all $i$.
By Lemma \ref{lem.UgH}, there exists $u_i\in \mathsf T(y)$ such that $u_i^{-1}g_iu_i\to v$ for some non-trivial $v\in U_2$.
Observe
\begin{equation*}
(yu_i)(u_i^{-1}g_iu_i)=yg_iu_i\in X.
\end{equation*}
By passing to a subsequence, $yu_i$ converges to some $y_0\in Y\cap \Om$.  Since $y_0v\in X$,
it follows $Yv\subset X$ by the relative minimality of $Y$.
\end{proof}

\section{Expansion of an analytic curve inside a horospherical subgroup}

For $1\le k\le d-2$, let $\mS_k$ denote the collection
 of all $k$-dimensional spheres $S\subset \mathbb S^{d-1}$ such that $\Gamma_2 S$ is closed
in the space of all $k$-dimensional spheres of $\mathbb S^{d-1}$, and set 
$$\mS:=\bigcup_{1\le k\le d-2} \mS_k.$$
For each $1\le k\le d-2$,  there exists a connected reductive subgroup $L_k\simeq \SO^\circ(k+1,1)$
such that the convex hull of any sphere $S\subset \mathbb S^{d-1}$ of dimension $k$
 is equal to $\pi(g_SL_k)=\pi (g_S \op{N}(L_k)) $ for some $g_S\in H_2$, where $\pi: {H_2}  =\op{F}(\bH^d) \to \bH^d$ is 
the base-point projection. Moreover the space of $k$-dimensional spheres of $\bS^{d-1}$ is homeomorphic
to the quotient space $H_2/\op{N}(L_k)$. It follows that
 $S\in \mS_k$ if and only if $[g_S] \op{N}(L_k)$ is closed. We note that $\mS$ consists of countably many spheres (cf. \cite[Coro. 5.8]{LO}).

We deduce the following density statement from  the equidistribution result
\cite[Theorem 1.5]{Sh2}:
\begin{thm}\label{prop.CZ}\label{yang}
Let $\cal C:[0,1]\to U_2$ be a non-constant analytic curve.
Let $g_2\in H_2$ be such that $(g_2\cal C(0))^-\in \bS^{d-1}$ is not contained in any sphere in $\mS$. Then for any sequence $t_i\to +\infty$,
\begin{equation*}
\limsup_{i\to\infty}[g_2]\cal Ca_{-t_i}=Z_2.
\end{equation*}
\end{thm}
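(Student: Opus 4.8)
The plan is to reduce the statement to the equidistribution theorem of Shah \cite[Theorem 1.5]{Sh2}, which concerns translates $[g_2]\cal C a_{-t}$ of an analytic (more generally, suitably nondegenerate) curve inside a finite-volume homogeneous space $Z_2=\Gamma_2\ba H_2$. Since $\Gamma_2$ is a cocompact lattice, $Z_2$ has finite volume, so Shah's theory applies. First I would recall the precise hypothesis in Shah's theorem: it requires that the curve $\cal C$, viewed through its endpoint map, not be ``degenerate'' with respect to the intermediate subgroups $L_k\simeq \SO^\circ(k+1,1)$; concretely, the translated curve equidistributes toward the Haar measure on $Z_2$ unless the curve is trapped in a translate of some proper intermediate subgroup whose orbit is closed, which is exactly the condition that $(g_2\cal C(0))^-$ lies on a sphere in $\mS$. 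So the content is to match the exclusion in Theorem \ref{prop.CZ} with the exclusion in Shah's theorem.

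The key steps, in order, would be: (1) Fix the one-parameter subgroup $\cal A=\{a_t\}$ and note that $U_2=\{e\}\times\cal U$ is the contracting horospherical subgroup for $a_{-t}$ as $t\to+\infty$ (so $a_{-t}$ \emph{expands} $U_2$), which is the setting in which translates of a curve inside the expanding horosphere equidistribute. (2) Recall the dictionary between $k$-dimensional spheres $S\subset\bS^{d-1}$, the reductive subgroups $L_k$, and closed orbits: $S\in\mS_k$ iff $[g_S]\op{N}(L_k)$ is closed in $Z_2$, and there are only countably many such $S$; this is the combinatorial input needed to state the exceptional set. (3) Translate the hypothesis ``$(g_2\cal C(0))^-$ is not contained in any sphere in $\mS$'' into the statement that the curve $\cal C$ is not contained — even after translating by any element normalizing some $L_k$ — in a submanifold of $U_2$ corresponding to a closed orbit of a proper intermediate subgroup; this is precisely the nondegeneracy condition under which Shah's equidistribution gives convergence to the Haar measure on all of $Z_2$. (4) Apply \cite[Theorem 1.5]{Sh2} to conclude that $[g_2]\cal C a_{-t}$ becomes equidistributed as $t\to\infty$; in particular for any sequence $t_i\to+\infty$ the sets $[g_2]\cal C a_{-t_i}$ have limsup (in the Hausdorff sense on closed subsets) equal to the support of Haar measure, which is $Z_2$.

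The main obstacle is step (3): one must verify carefully that the exclusion phrased geometrically in terms of spheres in $\mS$ is equivalent to (or at least implies) the algebraic nondegeneracy hypothesis required by Shah — i.e., that if the translated curve failed to equidistribute, then by the linearization technique (following \cite{DM}, \cite{Sh1}) and Ratner's measure classification \cite{Ra1} the limiting measure would be supported on a closed orbit $[g]\op{N}(L_k)$ of a proper intermediate subgroup, forcing $\cal C(0)$ (hence its negative endpoint) to lie on the corresponding $k$-sphere, which by closedness lies in $\mS_k$. The analyticity of $\cal C$ enters here to rule out partial degeneracy (a real-analytic curve that meets such a submanifold in a set with a limit point must lie inside it). Once this equivalence is in place, the passage from equidistribution of measures to density of the limsup of the curves is routine. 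I would also remark that since $\mS$ is countable and each excluded sphere is closed, the hypothesis is generic, though that observation is not needed for the proof.
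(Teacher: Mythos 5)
Your proposal is correct and follows essentially the same route as the paper: both reduce the statement to Shah's equidistribution theorem \cite[Theorem 1.5]{Sh2}, with the only real work being the verification that the hypothesis on $(g_2\cal C(0))^-$ implies the nondegeneracy condition in Shah's theorem — namely that for each $S_0\in\mS$ the parameter set $\{s:(g_2\cal C(s))^-\in S_0\}$ is finite (and likewise $\{s:\cal C'(s)=0\}$), which is exactly the analyticity argument you describe in your step (3).
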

\begin{proof}
 Let $S$ be the smallest sphere of $\mathbb S^{d-1}$ which contains the subset $(g_2\op{Im}(\cal C))^-=\{(g_2 \mathcal C(s))^-: s\in [0,1]\}$.
 As $\cal C$ is non-constant,
the dimension $k$ of $S$ is at least $1$. Since $(g_2\cal C(0))^-\in S$, $S$  is not contained in any sphere in $\mS$ by the hypothesis on $(g_2\cal C(0))^-$.
Since $\cal C$ is non-constant analytic, $\{s\in[0,1] : \cal C'(s)=0\}$ is a finite set. Similarly, it follows from the hypothesis on $\cal C$ that
for any $S_0\in\mS$, the set $\{s\in[0,1] : (g_2\cal C(s))^-\in S_0\}$ is finite. 
Now the claim follows from the equidistribution theorem \cite[Theorem 1.5]{Sh2}. \end{proof}

\section{Invariance by analytic curves and conclusion}

\begin{thm}\label{lem.YCX}  \label{mmm} Let $X$ be a closed $H$-invariant subset of $Z$.
Let $Y\subset X$ be a $U$-minimal subset with respect to $\Om$.  Suppose that
 there exists $y\in Y\cap \Om$ such that  $X-yH$ is not closed. Then
there exists an analytic curve $\cal C: [0,1]\to U_2$ such that $\cal C'(0)\ne 0$ and
$$Y\cal C\subset X .$$
\end{thm}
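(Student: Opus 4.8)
The plan is to start from Proposition~\ref{prop.YvX}, which under the hypothesis that $X-yH$ is not closed already yields a non-trivial $v\in U_2$ with $Yv\subset X$. The goal is to promote this single translate into an analytic curve's worth of translates. First I would combine this with Proposition~\ref{prop.YSY}: there is a one-parameter subsemigroup $S<AMU_2$ with $S\not\subset M$ and $YS\subset Y$. Writing the generator of $S$ in the Lie algebra of $AMU_2=\op{N}(U)\cap L$ and projecting, either $S$ has a non-trivial $U_2$-component, or $S\subset AM$. In the first case one already gets, by conjugating $v$ by the $U_2$-part and using $YS\subset Y$, a whole curve in $U_2$; but the more robust route is the second: if $S=\{a_{t}m_t : t\ge 0\}$ (up to the $M$-component), then for $y'\in Y\cap\Om$ we have $y'a_tm_t \in Y$ for all $t\ge 0$, and from $Yv\subset X$ with $v=u_{\bs}\in U_2$ we get
\begin{equation*}
Y a_{-t}m_t^{-1} v a_t m_t = Y a_{-t} m_t^{-1} u_{\bs} a_t m_t \subset X a_t m_t\subset X,
\end{equation*}
wait --- one must be careful about directions; the clean statement is that conjugating $v\in U_2$ by elements of $AM$ sweeps out a positive-dimensional analytic subset of $U_2$ as long as $S\not\subset M$, because $A$ acts on $U_2\cong\br^{d-1}$ by scaling and $M=\cal M$ acts by rotations, so the orbit $\{a m\, v\, (am)^{-1} : am\in S'\}$ with $S'$ the group generated by $S$ is an analytic curve through $v$ inside $U_2$ provided the $A$-component of $S$ is non-trivial (it sweeps out a ray), and even if only the $M$-component is non-trivial it sweeps out a circle.

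The key point is then: if $\beta\in AMU_2$ with $Y\beta\subset Y$ (from Proposition~\ref{prop.YSY}), and $v\in U_2$ with $Yv\subset X$ (from Proposition~\ref{prop.YvX}), then for every $n\ge 0$,
\begin{equation*}
Y \beta^{-n} v \beta^{n}\subset (Y\beta^{-n}) v \beta^n \subset Y v \beta^n \subset X\beta^n \subset X,
\end{equation*}
using $H$-invariance of $X$ only insofar as $\beta^n$ need not preserve $X$ --- so instead I would use $Y\beta^n\subset Y$ in the form $Y\beta^{-n}v\beta^n$: since $Y\beta^{-n}\subset Y$ is false ($S$ is only a semigroup), I would instead write $Yv\beta^n$: from $Yv\subset X$ we do not directly get $Yv\beta^n\subset X$. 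The correct manipulation is $Y\beta^{-n}v\beta^n$ where we first note $Y\beta^n\subset Y$ gives $Y=\overline{yU}\supset$ nothing useful backwards; so the genuinely clean argument is: conjugate $v$ by $\beta\in S$, i.e. consider $\beta^{-1}v\beta$. We have $Y\beta^{-1}v\beta$: apply $Yv\subset X$ after moving $\beta^{-1}$ across. Since $\beta^{-1}\notin S$ this fails too. I would therefore run the argument through Proposition~\ref{prop.YvX} directly with the \emph{semigroup} $S$: for each $t\ge 0$, $Ya_{t}m_t\subset Y$ (suitable sign), and $X$ being $H$-invariant is irrelevant; what we need is that $\{a_t m_t v a_t^{-1}m_t^{-1}\}$ or $\{a_{-t}m_t^{-1} v a_t m_t\}$ lands in $U_2$, which it does since $AM$ normalizes $U_2$, and the relation $Y(a_tm_t) (a_{-t}m_t^{-1} v a_t m_t) = Y v a_t m_t$; since $Yv\subset X$ and we need $X a_tm_t$, here $H$-invariance of $X$ \emph{is} used if $a_tm_t\in H$ --- but $a_tm_t=(a_t m, a_t m)\in H$, yes! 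So $X a_t m_t = X$. Hence $Y(a_tm_t)(a_{-t}m_t^{-1}v a_tm_t)\subset X$, and since $Ya_tm_t\subset Y$ (choosing the semigroup direction so this holds), we conclude $Y\cdot w(t)\subset X$ where $w(t):=a_{-t}m_t^{-1}v a_t m_t\in U_2$. As $t$ ranges over $[0,\infty)$, $t\mapsto w(t)$ is a non-constant analytic curve in $U_2$ because $S\not\subset M$ forces the $A$-component to be non-trivial (an $M$-only semigroup would be a circle subgroup, but $M$ centralizes nothing problematic; in any case $S\not\subset M$ is exactly the hypothesis that rules out $w(t)$ being constant, since $M$-conjugation preserves $\|v\|$ while $A$-conjugation scales it). Reparametrizing $t\in[0,\infty)$ to $[0,1]$ and checking $\cal C'(0)\ne 0$ from $\tfrac{d}{dt}\|w(t)\|\ne 0$ at $t=0$ completes the construction.

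The main obstacle I anticipate is precisely the bookkeeping around \emph{semigroup versus group} and the sign of the $A$-direction in Proposition~\ref{prop.YSY}: one must arrange that the one-parameter subsemigroup $S$ with $YS\subset Y$ contracts $U_2$ (i.e. is generated by an element whose $A$-component acts on $U_2$ as $t\mapsto e^{-\lambda t}$, $\lambda>0$) or, failing that, pass to $X$ rather than $Y$ and use Proposition~\ref{prop.YvX}'s output $v$ to absorb the wrong direction. A secondary subtlety is handling the degenerate case where $S\subset AM$ has trivial $A$-component, i.e. $S$ is contained in a conjugate of $M$; the hypothesis $S\not\subset M$ rules out $S\subset M$ but one must confirm it also prevents $S$ from being an $M$-circle that happens to act trivially on $v$ --- in fact conjugation of a non-trivial $v\in U_2\cong\br^{d-1}$ by a non-trivial circle in $\cal M=\SO(d-1)$ is genuinely non-constant (the circle acts on $\br^{d-1}$ with a non-trivial orbit unless $d-1=1$, and when $d=2$ there is no $M$, so $S$ must have non-trivial $A$-component), giving the analytic curve in all cases. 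Once $Y\cal C\subset X$ is established with $\cal C:[0,1]\to U_2$ non-constant analytic and $\cal C'(0)\ne 0$, the theorem is proved; the subsequent use of Theorem~\ref{yang} to upgrade to $X=Z$ is the content of later results, not of this statement.
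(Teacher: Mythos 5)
Your overall strategy is the paper's: combine Proposition \ref{prop.YSY} (a one-parameter subsemigroup $S<AMU_2$ with $S\not\subset M$ and $YS\subset Y$) with Proposition \ref{prop.YvX} (a nontrivial $v\in U_2$ with $Yv\subset X$) and sweep $v$ into a curve by conjugating with $S$. But as written there are two genuine gaps.

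First, in your main case ($S\subset AM$) the final deduction is backwards. You correctly obtain $Y(a_tm_t)\,w(t)=Yv\,a_tm_t\subset Xa_tm_t=X$ with $w(t)=(a_tm_t)^{-1}v\,(a_tm_t)$, but then conclude $Yw(t)\subset X$ ``since $Ya_tm_t\subset Y$.'' That inclusion only gives $Y(a_tm_t)w(t)\subset Yw(t)$, i.e. it shows that a \emph{subset} of $Yw(t)$ lies in $X$, not all of it. The step is repairable in two ways: either observe that $Ya_tm_t$ is a closed $U$-invariant subset of $Y$ meeting $\Om$ (since $\Om$ is $AM$-invariant and $AM$ normalizes $U$), so relative $U$-minimality forces $Ya_tm_t=Y$; or, as the paper does, conjugate in the opposite direction, taking $\cal C(t)=(m_ta_t)v(m_ta_t)^{-1}$, for which the chain $Y\cal C(t)=Y(m_ta_t)v(m_ta_t)^{-1}\subset Yv(m_ta_t)^{-1}\subset X(m_ta_t)^{-1}=X$ uses the semigroup inclusion $Ym_ta_t\subset Y$ in the correct direction. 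You clearly sensed this issue but never resolved it; the chain you ultimately commit to is the invalid one.

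Second, the complementary case is not actually handled. You assert that when $S$ has a non-trivial $U_2$-component one gets a curve ``by conjugating $v$ by the $U_2$-part'' --- but $U_2$ is abelian, so conjugating $v\in U_2$ by elements of $U_2$ does nothing. The correct argument there (the paper's Cases 1.b and 2) needs no $v$ at all: writing the generator of $S$ as $\alpha+\mu+\eta$ with $\alpha\in\op{Lie}A$, $\mu\in\op{Lie}M$, $\eta\in\op{Lie}U_2$, either $\alpha\ne 0$, in which case $S\subset w^{-1}MAw$ for some $w\in U_2$ (possibly $w\ne e$, a subcase your dichotomy misses), or $\alpha=0$ and $S\subset MU_2$ with $\eta\ne0$. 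In both of these the $U_2$-component of $S$ itself --- $\cal C(t)=w^{-1}m_ta_twa_t^{-1}m_t^{-1}$, respectively $\cal C(t)$ defined by $\exp(t(\mu+\eta))\in M\,\cal C(t)$ --- is a non-constant analytic curve with $\cal C'(0)\ne0$, and $Y\cal C\subset YS\cdot AM\subset Y\cdot AM\subset X$ because $AM\subset H$ and $X$ is $H$-invariant. Only when $S\subset AM$ exactly (your main case) is Proposition \ref{prop.YvX} needed.
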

\begin{proof}
By Proposition \ref{prop.YSY}, there exists a  one-parameter subsemigroup $S\subset MAU_2$  such that $S\not\subset M$ and $YS\subset Y$.
Now $S$ is either an unbounded subsemigroup of $w^{-1}MAw$ for some $w\in U_2$, or contained in $MU_2$ but not in $M$. 

\medskip

\noindent{{\bf Case 1}: $S\subset w^{-1}MAw$ for some $w\in U_2$ and $S$ is unbounded.}

\noindent{{\bf Case 1.a}:  $w=e$}. In this case, we have $S=\{(m_ta_t,m_ta_t):t\geq 0\}\subset MA$.
By Proposition \ref{prop.YvX}, there exists a nontrivial $v\in U_2$ such that $Yv\subset X$.
Observe $YSvAM\subset YvAM\subset X$.
Define $\cal C:[0,1]\to U_2$ by
\begin{equation*}
\cal C(t)=(e, m_ta_tva_t^{-1}m_t^{-1}).
\end{equation*}
Since $\cal C\subset SvAM$, we have $Y\cal C\subset X$.
If $\xi\in \op{Lie}(\cal A \cal M)$ such that $m_ta_t=\exp {t\xi}$, then
$\cal C(t)$ is given by $\op{Ad}_{\exp {t\xi}} v $ in the additive notation. Hence
$\cal C$ is analytic and $\cal C'(0)=\op{ad}_\xi (v) \ne 0$, since $\xi\notin \op{Lie}(\cal M)$.

\noindent{{\bf Case 1.b}:  $w\ne e$}.
We write $S=\{(m_ta_t,w^{-1}m_ta_tw):t\geq 0\}$.
Observe that $YSAM\subset X$, and define $\cal C:[0,1] \to U_2$ by
\begin{equation*}
\cal C(t)=(e,w^{-1}m_ta_twa_t^{-1}m_t^{-1}).
\end{equation*}
Since $\cal C\subset  SAM$, we have $Y\cal C\subset X$.
If $\xi\in \op{Lie}(\cal A \cal M)$ such that $m_ta_t=\exp {t\xi}$, then $\cal C(t)$ is given by $(\op{Ad}_{e^{t\xi}} w) -w$ in the additive notation. Hence
$\cal C$ is analytic and $\cal C'(0)=\op{ad}_\xi (w) \ne 0$, since $\xi\notin \op{Lie}(\cal M)$.

\medskip

\noindent{{\bf Case 2}: $S\subset MU_2$.}
Write $S=\{\exp (t(\xi+\eta)): t\ge 0\}$
where $\xi\in\op{Lie} M$ and $\eta\in\op{Lie} U_2-\{0\}$.
Define $\cal C : [0,1]\to U_2$ so that $\cal C(t)$ is the $U_2$-component of $\exp (t(\xi+\eta))$, which
 is explicitly given by $ \sum_{n=1}^\infty \frac{(-\xi)^{n-1}\eta}{n!} t^n$ in the additive notation.
So $\cal C(t)$ is analytic and $\cal C'(0)=\eta \ne 0$.
Since $\cal C\subset  SM$, we have $Y\cal C\subset X$.
\end{proof}

\begin{prop}  \label{fin} Let $E$ be an $H$-invariant subset of $Z$ which is not closed.
Then $E$ is dense in $Z$. \end{prop}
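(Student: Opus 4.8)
The plan is to show that if $E$ is $H$-invariant and not closed, then $\ov E = Z$, by applying the machinery built up above. First I would pass to $X := \ov E$, a closed $H$-invariant subset of $Z$, and fix a $U$-minimal subset $Y \subset X$ with respect to $\Om$ (which exists by Zorn's lemma, as noted before Lemma \ref{min}). The key point is that since $E$ is not closed, there must exist some $y \in Y \cap \Om$ for which $X - yH$ is not closed: indeed, since $E \subsetneq X$, one expects to find a point of $X$ that is a limit of points of $E$ from ``transverse'' directions; the $H$-invariance together with the fact that $H$-orbits meeting $\Om$ are either closed or not (to be cross-referenced with Theorem \ref{dm}'s dichotomy structure) should let me locate such a bad point inside $Y \cap \Om$ after translating by $H$ and using relative minimality of $Y$. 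I expect this reduction — producing $y \in Y\cap\Om$ with $X - yH$ not closed — to be the main obstacle, since it requires carefully turning ``$E$ is not closed'' into the non-local-closedness hypothesis at a point lying in the relatively minimal set $Y$.

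Granting that, I would invoke Theorem \ref{lem.YCX} (= Theorem \ref{mmm}) to obtain an analytic curve $\cal C : [0,1] \to U_2$ with $\cal C'(0) \neq 0$ and $Y\cal C \subset X$. Next I would feed this into Theorem \ref{yang}: by Lemma \ref{min}, $\pi_2(Y) = Z_2$, and since $\mS$ is countable, for a generic $y' = (y_1', y_2') \in Y$ the point $(g_2 \cal C(0))^-$ — where $y_2' = [g_2]$ — is not contained in any sphere in $\mS$ (the bad locus being a countable union of proper analytic subsets). For such $y'$, Theorem \ref{yang} gives $\limsup_{t_i \to +\infty} [g_2]\cal C a_{-t_i} = Z_2$. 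Since $A$ normalizes $U_2$ and $\Om$, and $Y$ (hence $X$) is built to be compatible with the $A$-action on $\Om$, the translates $y' \cal C a_{-t_i}$ stay in $X$ after absorbing the $a_{-t_i}$ appropriately; more precisely $Y \cal C \subset X$ together with $X$ being $H$-invariant (so $A$-invariant) and closed yields $\ov{\pi_2(Y\cal C A)} \supset Z_2$ in the second coordinate, while $\pi_1$ of the relevant limit set fills out $\RF M \cdot \cal U \cdot \cal A = Z_1$ by minimality of the horospherical action (Lemma \ref{min} and \cite{Win}).

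Finally I would assemble these two coordinate statements: $X$ is closed, $H$-invariant, and contains a subset projecting densely onto $Z_2$ in one coordinate while containing a full $\RF M$-worth of fibers in the other, and then propagate full density across $Z = Z_1 \times Z_2$ using that $H$ acts with dense orbits on the relevant pieces — concluding $X = Z$, i.e.\ $\ov E = Z$. The one subtlety to watch is ensuring the genericity choice of $y' \in Y$ is consistent with $y'$ lying in (or close to) $\Om$ so that Theorem \ref{yang}'s hypothesis on $(g_2\cal C(0))^-$ genuinely holds; this is handled because the set of $y_2'$ with $(g_2\cal C(0))^-$ in some sphere of $\mS$ is meager in $Z_2 = \pi_2(Y)$, so its complement is dense and in particular meets $\pi_2(Y \cap \Om)$.
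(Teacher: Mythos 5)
Your skeleton matches the paper's: reduce to $X=\ov E$, find a relative $U$-minimal $Y$ and a point $y\in Y\cap\Om$ with $X-yH$ not closed, apply Theorem \ref{lem.YCX} to get the curve $\cal C$, and finish with Theorem \ref{yang}. But the two places you flag as "subtleties" are exactly where your argument has genuine gaps. First, the reduction to a point $y\in Y\cap \Om$ with $X-yH$ not closed is not just the main obstacle --- your order of quantifiers is wrong. You fix a $U$-minimal $Y\subset X$ first and then hope to find such a $y$ in it; this can fail (e.g.\ if $Y$ sits inside a closed $H$-orbit $yH$ with $X-yH$ closed, the hypothesis of Theorem \ref{lem.YCX} fails for that $Y$). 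The paper chooses $Y$ \emph{after} a case analysis on whether $E$ is locally closed: if it is, $X-E$ is closed and one takes $Y$ to be a $U$-minimal subset of $X-E$ with respect to $\Om$; if not, one takes $Y$ inside $\cl{xH}$ for a suitable $x\in (X-E)\cap\Om$ and then splits according to whether $Y\cap\Om\subset xH$. Also, appealing to ``Theorem \ref{dm}'s dichotomy structure'' here would be circular, since Proposition \ref{fin} is an ingredient of that theorem.

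Second, your genericity argument for invoking Theorem \ref{yang} does not close. You need a single point $(y_1,y_2)\in Y$ satisfying \emph{both} conditions simultaneously: $y_1\in\RFM$ (so that $y_1a_{-t_i}$ has a convergent subsequence, which is what lets you pass to the limit in the first factor), and $(g_2\cal C(0))^-$ avoiding every sphere in $\mS$. A Baire-category argument in $Z_2$ does not give this: a comeager subset of $Z_2$ need only meet nonmeager sets, and $\pi_2(Y\cap\Om)$ (indeed the slice $\{y_2u_\bt:\bt\in\mT(y)\}$, which corresponds to the nowhere dense set $\La_1$ when $\Ga_1$ is not a lattice) may itself be meager. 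The paper instead fixes $(y_1,y_2)\in Y$ with $y_1\in\RFPM$ and chooses $\bt$ with $(g_1u_\bt)^-\in\La_1$ and $(g_2u_\bt\cal C(0))^-$ outside $\bigcup\mS$, using the Flaminio--Spatzier theorem that the limit set of a Zariski dense group cannot be covered by countably many proper subspheres; this arithmetic-geometric input cannot be replaced by category alone. Finally, your concluding assembly is off: $\RFM\cdot\cal U\cdot\cal A$ equals $\RFPM$, not $Z_1$, when $\Ga_1$ is not a lattice. The correct finish is to produce a single $z_1\in\RFM$ with $\{z_1\}\times Z_2\subset X$ and then apply $H$-invariance, since $(\{z_1\}\times Z_2)H=Z$.
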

\begin{proof} Let $X$ denote the closure of $E$. By the assumption that
$E$ is not closed, there exists $x\in X-E$. Since any $H$-orbit meets $\Omega$,
we may assume $x\in (X-E)\cap \Omega$, by modifying $x$ using an element of $H$.

We claim that there exists a  $U$-minimal subset $Y$ of $X$ with respect to $\Om$ such that
for some $y\in Y\cap \Om$,  $X-yH$ is not closed.

If $E$ is locally closed, then $X-E$ is a closed subset. Let $Y$ be a $U$-minimal subset of $X-E$ with respect to $\Om$. Choose
$y\in Y\cap \Om$. Then $X-yH$ is not closed, since $y\in X-E$.

If $E$ is not locally closed, then $X-E$ is not closed. Let $Y$ be a $U$-minimal closed subset of $\cl{xH}$ with respect to $\Om$.
If $Y\cap \Om\subset xH$, choose $y\in Y\cap \Om$. 
If $Y\cap \Om\not\subset xH$, then choose $y\in (Y\cap \Om)-xH$.
We can then check that $X-yH$ is not closed.

Therefore, by Theorem \ref{lem.YCX}, there exists a non-constant analytic curve $\cal C:[0,1]\to  U_2$ such that 
$$Y\cal C\subset X.$$
By Theorem
\ref{prop.CZ},
there exists $y_2\in Z_2$ such that for any sequence $t_i\to  +\infty$,
\begin{equation}\label{eq.ED}
\limsup_{i\to\infty}y_2\cal Ca_{-t_i}=Z_2.
\end{equation}
 By Lemma \ref{min}, we
can choose $y_1\in \RFPM$ such that $(y_1,y_2)\in Y$.
Choose $g_i\in H_i$ so that $y_i=[g_i]$.

We claim that we can choose $\bt \in \br^{d-1}$ so that $(g_1 u_\bt)^-\in \Lambda_1$ and $(g_2u_\bt \cal C(0))^-$ does not belong {to} any sphere
contained in $\mS$.  It is convenient to use the upper-half space model of $\bH^{d}$ in which we have $\partial \bH^d=\br^{d-1}\cup\{\infty\}$, and
can take $v_o\in \T^1(\bH^d) $ {to} be the upward normal vector at $o=(0,\cdots, 0, 1)$ so that $v_o^+=\infty$ and $v_o^-=0$.
Then for all $\bt\in \br^{d-1}$,  we have $(u_\bt)^+=\{\infty\}$ and $(u_\bt)^-= u_\bt (0)=\bt$.
Suppose that the claim does not hold.  Then for any $\bt\in \br^{d-1}$ such that
 $\bt \in g_1^{-1}\Lambda_1 \cap \br^{d-1} $, we have $\bt  + \cal C(0) \in g_2^{-1}S\cap \br^{d-1}$ for some sphere $S\in \mS$.
Therefore, 
$ \Lambda_1 -g_1(\infty) \subset  \bigcup_{S\in \mS} g_1 \cal C(0)^{-1}g_2^{-1}S$.
This is a contradiction, since $\La_1$, being the limit set of a Zariski dense subgroup of $G$, cannot be contained in the union of countably many proper sub-spheres
of $\bS^{d-1}$ by \cite[Coro. 1.4]{FS}.

By replacing $(y_1,y_2)$ with $(y_1u_\bt,y_2u_\bt)$, we may now assume  that $y_1\in\RFM$, as \eqref{eq.ED} holds  for $y_2u_\bt$ as well
by Theorem \ref{yang}.

Since $y_1$ belongs to the compact $A_1$-invariant subset $\RFM$, there exists $t_i\to +\infty$ such that $y_1a_{-t_i}$
converges to some $ z_1\in \RFM$.
As $(y_1,y_2)\in Y$ and $X$ is $A$-invariant, it follows
\begin{equation*}
(y_1a_{-t_i} ,y_2\cal C a_{-t_i})\subset X.
\end{equation*}
By \eqref{eq.ED}, we obtain $\{z_1\}\times Z_2\subset X$.
Since $X$ is $H$-invariant, this implies $X=Z$.
\end{proof}

A collection of elements $g_1, \cdots, g_k\in \SO^\circ(d,1)$, $k\ge 2$, is called a Schottky generating set
if there exist mutually disjoint closed round balls $B_1, \cdots, B_k$ and $ B_1', \cdots, B_k'$ in $\bb S^{d-1}$
such that $g_i$ maps the exterior of $B_i$ onto the interior of $B_i'$ for each $i=1, \cdots, k$.
A subgroup of $\SO^\circ(d,1)$ is called a (classical) Schottky subgroup if it is generated by some Schottky generating set.
It is easy to see that a Schottky subgroup is a convex cocompact subgroup.

The following lemma is well-known (e.g., \cite[Proposition 4.3]{Be}). We give a short  elementary proof.

\begin{lem}\label{sh}
Any Zariski dense discrete subgroup $\Gamma$ of $\SO^\circ (d,1)$ contains a Zariski dense Schottky subgroup.
\end{lem}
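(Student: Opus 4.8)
The plan is to produce, inside any Zariski dense discrete subgroup $\Gamma<\SO^\circ(d,1)$, a pair of loxodromic elements generating a Schottky group which is itself still Zariski dense. First I would recall that a discrete Zariski dense subgroup of $\SO^\circ(d,1)$ contains a loxodromic element $h$ (an element with exactly two fixed points $h^+\ne h^-$ on $\mathbb S^{d-1}$ acting as a North–South dynamics); this is classical, since a discrete group all of whose elements are elliptic or parabolic is elementary, hence not Zariski dense. Fix such an $h$, with attracting point $h^+$ and repelling point $h^-$. The key classical mechanism (a ping-pong argument) is: if I can find a second loxodromic $h'$ whose fixed-point pair $\{(h')^+,(h')^-\}$ is disjoint from $\{h^+,h^-\}$, then for a sufficiently large power $n$ the elements $g_1=h^n$ and $g_2=(h')^n$ admit disjoint round balls $B_1,B_1',B_2,B_2'$ around the four fixed points with $g_i$ sending the exterior of $B_i$ into the interior of $B_i'$ — i.e. $\langle g_1,g_2\rangle$ is Schottky. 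So the two tasks are: (i) arrange the second loxodromic with the disjointness-of-fixed-points property, and (ii) arrange that the resulting Schottky group $\langle h^n,(h')^n\rangle$ is still Zariski dense.

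For (i) I would produce $h'$ as a conjugate $\gamma h\gamma^{-1}$ of $h$ by a suitable $\gamma\in\Gamma$: then $h'$ is automatically loxodromic with $(h')^\pm=\gamma(h^\pm)$. The fixed points of $h'$ are disjoint from those of $h$ provided $\gamma$ does not lie in the (proper, Zariski closed) subset of $G$ sending $\{h^+,h^-\}$ into itself; since $\Gamma$ is Zariski dense it is not contained in this proper subvariety, so such a $\gamma\in\Gamma$ exists. For (ii) I would use the same idea at the level of the Zariski closure: the Zariski closure of $\langle h^n,\gamma h^n\gamma^{-1}\rangle$ depends on the choice of $\gamma$, and the set of $\gamma\in G$ for which this closure is a \emph{proper} subgroup $H_\gamma\subsetneq G$ is a countable union of proper subvarieties of $G$ (for fixed $n$ and fixed $h$ there are only countably many possible Zariski closures, each algebraic, each forcing an algebraic condition on $\gamma$). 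As $\Gamma$ is Zariski dense and $G$ is not a countable union of proper subvarieties (being an irreducible variety over an uncountable field), I can choose $\gamma\in\Gamma$ avoiding all of them, so that $\langle h^n,\gamma h^n\gamma^{-1}\rangle$ is Zariski dense; one checks that among the $\gamma$ in the complement of this countable union, those of (i) still form a co-proper-subvariety set, so a common good $\gamma$ exists. Then pass to a power $n$ large enough to make the ping-pong balls disjoint, and $\Gamma_0:=\langle h^n,\gamma h^n\gamma^{-1}\rangle$ is the desired Zariski dense Schottky subgroup.

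The main obstacle is step (ii): making precise that Zariski density is preserved after conjugating and taking powers. One has to argue that passing from $h$ to $h^n$ does not collapse Zariski density — here I would note that the Zariski closure of $\langle h^n,\gamma h^n\gamma^{-1}\rangle$ contains the Zariski closure of $\langle h,\gamma h\gamma^{-1}\rangle$ up to finite index, because $h^n$ generates a finite-index subgroup of the (abelian) Zariski closure of $\langle h\rangle$ — so it suffices to choose $\gamma$ making $\langle h,\gamma h\gamma^{-1}\rangle$ Zariski dense, and then $n$ is free to be chosen afterwards purely for the geometric ping-pong condition. The genuinely delicate point is the countability/avoidance argument showing such $\gamma\in\Gamma$ exists: one packages the finitely many conjugacy-invariant constraints (the Zariski closure of $\langle h,ghg^{-1}\rangle$ being contained in a given proper connected reductive subgroup up to finite index) as countably many proper subvarieties of $G$ and invokes Zariski density of $\Gamma$ together with the Baire-type fact that an irreducible variety over $\mathbb R$ is not a countable union of proper closed subvarieties. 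Once $\gamma$ is fixed, the remaining verification that large powers give a genuine Schottky generating set (disjoint round balls with the stated mapping property) is the standard North–South-dynamics computation and I would state it without detailed estimates.
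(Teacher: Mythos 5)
Your overall strategy (ping-pong on high powers of loxodromic elements with pairwise disjoint fixed points) is the same as the paper's, and step (i) together with the final ping-pong verification is fine. The genuine gap is in step (ii), where you certify Zariski density of the resulting Schottky group. First, the avoidance argument fails: Zariski density of $\Gamma$ lets you avoid a \emph{finite} union of proper subvarieties, not a countable one. Indeed $\Gamma$ is countable, hence is itself contained in a countable union of proper subvarieties (its own points), so the fact that $G$ is not such a union says nothing about whether $\Gamma$ meets the complement of your particular bad set. Second, the bad set is not visibly a countable union of proper subvarieties to begin with: a fixed loxodromic $h$ lies in uncountably many proper algebraic subgroups of $\SO^\circ(d,1)$ (e.g.\ the stabilizers of totally geodesic subspaces containing its axis form positive-dimensional families), so the assertion that there are ``only countably many possible Zariski closures'' is false. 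Third, the reduction ``choose $\gamma$ so that $\langle h,\gamma h\gamma^{-1}\rangle$ is Zariski dense, then choose $n$ freely'' is unsound: $\langle h^n,k^n\rangle$ has infinite index in $\langle h,k\rangle$, and its Zariski closure can be strictly smaller. For instance, if the rotation parts of $h$ and $k$ have finite order dividing $n$, then the Zariski closure of $\langle h^n,k^n\rangle$ is generated by two hyperbolic one-parameter subgroups and therefore preserves the totally geodesic subspace spanned by the two axes, which is proper once $d\ge 4$.

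The paper sidesteps all of this by certifying Zariski density through the limit set rather than through the generators. Using the density of the set of pairs $(\gamma^+,\gamma^-)$ in $\Lambda\times\Lambda$, it selects $m\le d-1$ hyperbolic elements of $\Gamma$ whose $2m$ fixed points are mutually disjoint and not contained in any proper sub-sphere of $\mathbb S^{d-1}$ (possible because $\Lambda$ itself lies in no proper sub-sphere, by Zariski density). The Schottky group generated by sufficiently high powers then has limit set containing those fixed points, hence not contained in any proper sub-sphere, hence is Zariski dense --- a criterion that is insensitive to passing to powers and requires no subvariety-avoidance at all. If you want to keep a two-generator argument you would need a genuinely different mechanism (as in Benoist's treatment via proximality), not the countable-avoidance step as written.
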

\begin{proof} Let $\La$ denote the limit set of $\Gamma$.  For each hyperbolic element $\gamma\in \Gamma$,
$\gamma^{+}$ and $\gamma^-$ are respectively the attracting and repelling fixed points of $\gamma$.
As $\Gamma$ is non-elementary, it follows from \cite[Proposition 2.7]{Eb} that
the set $\{(\gamma^+, \gamma^-): \text{$\gamma $ is a hyperbolic element of $\Gamma$}\}$ is a dense subset of $\Lambda\times \Lambda$.

Choose two hyperbolic elements $\ga_1,\ga_2 \in \Ga$ such that $\{\ga_1^{\pm}\}$ and $\{\ga_2^{\pm}\}$ are disjoint
from each other. Let $S_1$ be the smallest sub-sphere of $\mathbb S^{d-1}$ which contains
 $\{\ga^{\pm}_i: i=1,2\}$. If $S_1\ne \mathbb S^{d-1}$, then we choose a hyperbolic element $\ga_3\in \Ga$ so that
 $\{\ga_3^{\pm}\}\cap S_1=\emptyset$. Let $S_2$ be the smallest sub-sphere of $\mathbb S^{d-1}$ which contains $\{\ga_{i}^{\pm}: i=1,2,3\}$.
 Then $\op{dim}S_2 >\op{dim} S_1$.
 Continuing in this fashion, we can find a sequence of hyperbolic elements $\ga_1, \cdots, \ga_m$ of $\Gamma$
 with $m\le d-1$ such that 
 the sets $\{\ga_i^{\pm}\}$ are all mutually disjoint and their union  is not contained in any proper sub-sphere of $\mathbb S^{d-1}$.
 Now for  a sufficiently large $k$, we can find pairwise disjoint round balls $B_i^{\pm}$ in $\bb{S}^{d-1}$
such that $\ga_i^{k}$ maps the exterior of $B_i^-$ to the interior of $B_i^+$ for each $i$; this is possible as $\ga_i^\pm$ are all distinct and for each $i$, $B_i^\pm$ can be chosen arbitrarily close to $\ga_i^\pm$ as we make $k$ large.
 Hence they form a Schottky generating set.
 Let  $\Gamma_0$ be the subgroup generated by them.
 Since the limit set of $\Gamma_0$ contains all fixed points of $\ga_i^k$, that is,
 $\{\ga_i^{\pm}:i=1, \cdots, m\}$, it is not contained in any proper sub-sphere of $\bb S^{d-1}$. Hence $\Gamma_0$ is Zariski dense.
\end{proof}

\noindent{\bf Proof of Theorems \ref{mt} and \ref{dm}.} 
In order to use the notations introduced in sections 2-6, 
let $\Gamma_1<H_1$ be a Zariski dense discrete subgroup and $\Gamma_2 $ be a cocompact lattice in $H_2$.
Since $\Gamma_1$ is countable and $\Gamma_2\ba H_2$ is compact, a closed $\Gamma_1$-orbit in $\Ga_2\ba H_2$ is necessarily finite.


 For any $(g_1, g_2)\in H_1\times H_2$, observe that the following are all equivalent to each other:
\begin{enumerate}
\item The orbit $[(g_1, g_2)]H $ is closed (resp. dense) in $( \G_1\times \Gamma_2 )\ba (H_1\times H_2)$;
\item The orbit $(\Gamma_1\times \Gamma_2) [(g_1, g_2)]$ is closed (resp. dense) in $ (H_1\times H_2) /H$;
\item The product $\Gamma_2 g_2 g_1^{-1} \Gamma_1$ is closed (resp. dense) in $G$;
\item The orbit $[g_2g_1^{-1}]\Gamma_1 $ is finite (resp. dense) in $ \G_2\ba H_2$. 
\end{enumerate}




We first claim Theorem \ref{dm} when  $\Gamma_1$ is convex cocompact.
Suppose that $X$ is a closed $H$-invariant subset of $Z=\Gamma_1\ba H_1\times \Gamma_2\ba H_2$, and suppose that $X\ne Z$.
If $X$ consists of finitely many $H$-orbits, then each of them must be closed by Proposition \ref{fin}.
Now suppose that $X$ contains infinitely many $H$-orbits, say $x_iH$. Each $x_iH$ should be closed again  by Proposition \ref{fin}.
Consider the set $E:=\bigcup x_iH$. Recalling that every $H$-orbit meets $\Om$,
 we may assume that $x_i\in \Om$ and it  converges to some $x\in \Om-E$; if $x\in x_jH$,
then we replace $E$ by $\bigcup_{i>j} x_iH$. Since $E$ is not closed, by Proposition \ref{fin}, $E$ is dense in $Z$. This proves the claim.
In view of the above equivalence,  Theorem \ref{mt} follows when $\Gamma_1$ is convex cocompact.

Since any Zariski dense discrete subgroup of $H_1$ contains a Zariski dense convex cocompact subgroup
by Lemma \ref{sh},  Theorem \ref{mt} follows. This implies Theorem \ref{dm} for a general Zariski dense discrete subgroup again in view
of the above equivalence.

\end{document}